\newcounter{version}
\newcounter{short}
\newcounter{long}
\numberwithin{equation}{section}
\theoremstyle{definition}
\newtheorem{dfn}{Definition}[section]
\newtheorem{rem}[dfn]{Remark}
\theoremstyle{plain}
\newtheorem{thm}[dfn]{Theorem}
\newtheorem{prp}[dfn]{Proposition}
\newtheorem{cor}[dfn]{Corollary}
\newtheorem{lem}[dfn]{Lemma}
\newcommand\C{{\mathbb C}}
\newcommand\Z{{\mathbb Z}}
\newcommand\N{{\mathbb N}}
\newcommand\T{{\mathbb T}}
\newcommand\LL{{\mathcal L}}
\newcommand\Id{{\rm Id}}
\newcommand\ad{{\rm ad}}
\newcommand\Ann{{\rm Ann}}
\newcommand\Hom{{\rm Hom}}
\newcommand\End{{\rm End}}
\newcommand\Span{{\rm Span}}
\newcommand\Der{{\rm Der\,}}
\newcommand\darrow{\longrightarrow {\mkern -27mu} {\raise 6pt \hbox{\it d}} {\mkern 16mu}}
\newcommand\del{\partial}
\date{}
\begin{document}

\

\title
[Simple cuspidal modules for solenoidal Lie algebras]
{Classification of simple cuspidal modules for solenoidal Lie algebras}
\author{Yuly Billig}
\address{School of Mathematics and Statistics, Carleton University, Ottawa, Canada}
\email{billig@math.carleton.ca}
\author{Vyacheslav Futorny}
\address{ Instituto de Matem\'atica e Estat\'\i stica,
Universidade de S\~ao Paulo,  S\~ao Paulo,
 Brasil}
 \email{futorny@ime.usp.br}
\let\thefootnote\relax\footnotetext{{\it 2010 Mathematics Subject Classification.}
Primary 17B10, 17B66; Secondary 17B68.}

\begin{abstract}
We give a new conceptual proof of the classification of cuspidal modules for the solenoidal Lie algebra.
This classification was originally published by Y.~Su \cite{Su}. Our proof is based on the theory of
modules for the solenoidal Lie algebras that admit a compatible action of the commutative algebra 
of functions on a torus.
\end{abstract}

\maketitle

\section{Introduction.}

In \cite{BF} we classify simple $W_n$-modules with finite-dimensional weight spaces. This generalizes the 
classical result of Mathieu \cite{Mat} on simple modules with finite-dimensional weight spaces for the Virasoro
algebra. Mathieu proved that simple weight modules fall into two classes: (1) highest/lowest weight modules and
(2) modules of tensor fields on a circle and their quotients.

In \cite{BF} we use solenoidal Lie algebras as a bridge between the Lie algebra $W_1$ of vector fields on a circle, 
and the Lie algebra $W_n$ of vector fields on an $n$-dimensional torus. A solenoidal Lie algebra $W_\mu$ is a subalgebra
in $W_n$ which consists of vector fields collinear to a generic vector $\mu$ at each point of the torus. Integral curves of
such vector fields are dense windings of a torus, which motivates the term ``solenoidal''. Solenoidal Lie algebras are
also known in the literature as the centerless higher rank Virasoro algebras.

In many ways solenoidal Lie algebras behave like $W_1$, yet they are graded by $\Z^n$, like $W_n$. In fact, $W_n$
can be decomposed (as a vector space) into a direct sum of $n$ solenoidal subalgebras.

The main result of \cite{BF} relies on the classification of simple weight modules for the solenoidal Lie algebras with 
a property that the dimensions of all weight spaces are uniformly bounded. We call such modules cuspidal.

Classification of cuspidal modules for the solenoidal Lie algebras was published by Su \cite{Su}. However
the methods of \cite{Su} are extremely computational and we were unable to verify all the details. In the present
paper we offer an alternative conceptual proof of this classification. Our proof is based on the technique developed in \cite{Jet}
and \cite{BF}.

 Using the classification of simple cuspidal modules, Lu and Zhao \cite{LZ} classified all simple modules for the solenoidal
Lie algebras with finite-dimensional weight spaces.

It follows from the result of Mathieu \cite{Mat} that the classification of simple cuspidal $W_1$-modules is given by
the modules of tensor fields on a circle $T(\alpha, \beta)$, $\alpha,\beta\in\C$. The modules $T(\alpha,\beta)$
have bases $\left\{ v_s \, | \, s \in \beta + \Z \right\}$ and the action of $W_1$ is given by
\begin{equation}
e_k v_s = (s + \alpha k) v_{s+k}, \quad k \in\Z, \; s \in\beta+\Z .
\label{actT}
\end{equation} 
Here $\left\{ e_k \, | \, k\in\Z \right\}$ is a basis of $W_1$ and its Lie bracket is
\begin{equation}
[e_k, e_m] = (m-k) e_{m+k}, \quad k,m\in\Z .
\label{Witt}
\end{equation}
A simple cuspidal $W_1$-module is isomorphic to one of the following:

$\bullet$ a module of tensor fields $T(\alpha, \beta)$ with $\alpha \neq 0, 1$ or $\alpha = 0, 1$ and
$\beta \not\in\Z$,

$\bullet$ the quotient $\overline{T}(0,0)$ of the module $T(0,0)$  of functions by a one-dimensional 
submodule of constant functions,

or

$\bullet$ the trivial 1-dimensional module.

For a generic vector $\mu\in\C^n$ we consider a lattice $\Gamma_\mu \subset \C$, where $\Gamma_\mu$
is the image of $\Z^n$ under the map $r\in\Z^n \mapsto \mu \cdot r \in \C$. Then the solenoidal Lie algebra
$W_\mu$ may be presented as the Lie algebra with a basis $\left\{ e_k \, | \, k \in \Gamma_\mu \right\}$ and the
Lie bracket \eqref{Witt} with $k, m\in\Gamma_\mu$ instead of $\Z$. Likewise, the $W_\mu$-module $T(\alpha, \beta)$,
$\alpha, \beta\in\C$ has a basis $\left\{ v_s \, | \, s \in \beta + \Gamma_\mu \right\}$ and the action of $W_\mu$
is still given by the formula \eqref{actT} but with a change $k \in \Gamma_\mu$, $s\in \beta+\Gamma_\mu$.

The classification of simple cuspidal $W_\mu$-modules is the same as in the case of $W_1$, with a replacement of
the condition $\beta\not\in \Z$ for the modules $T(\alpha,\beta)$, $\alpha = 0, 1$, with the condition
$\beta\not\in\Gamma_\mu$.

A special class of modules plays a crucial role in our approach -- these are $W_\mu$-modules that admit a compatible action of 
the commutative algebra $A$ of functions on a torus. One of our key results is that every simple cuspidal $W_\mu$-module
is a homomorphic image of a simple $AW_\mu$-module. Using the methods developed in \cite{Jet} we show that
simple cuspidal $AW_\mu$-modules are precisely the modules of tensor fields $T(\alpha, \beta)$. These two results combined 
yield the  classification of simple cuspidal $W_\mu$-modules.


The present paper is intertwined with our recent work \cite{BF}. We use in \cite{BF} the classification of cuspidal
$W_\mu$-modules. In the present work we use some of the methods and techniques developed in \cite{BF},
which are independent of the classification of $W_\mu$-modules.

The paper is organized as follows. In Section 2 we review solenoidal Lie algebras, introduce the family $T(\alpha, \beta)$
of $W_\mu$-modules and exhibit their elementary properties. In Section 3 we study the structure of cuspidal
$AW_\mu$-modules and prove that every simple cuspidal $AW_\mu$-module is isomorphic to $T(\alpha, \beta)$ for
some $\alpha, \beta\in\C$. In Section 4 for each cuspidal $W_\mu$-module we construct its $A$-cover and use this 
construction to establish our main result.

\section{Solenoidal Lie algebras and their cuspidal modules}

 Consider the Lie algebra $W_n$ of vector fields on an $n$-dimensional torus. The algebra of 
(complex-valued) Fourier polynomials on $\T^n$ is isomorphic to the algebra of Laurent polynomials
$$A = \C [t_1^{\pm 1}, \ldots, t_n^{\pm 1}] ,$$ 
and $W_n$ is the Lie algebra of derivations of $A$. Thus $W_n$ has a natural structure of an $A$-module,
which is free of rank $n$. We choose $d_1 = t_1 \frac{\partial}{\partial t_1}, \ldots, d_n = t_n \frac{\partial}{\partial t_n}$
as a basis of this $A$-module:
$$W_n = \mathop\oplus\limits_{p=1}^n A d_p .$$
In case when $n=1$ we get the Witt algebra $W_1$ with basis $e_k = t_1^k d_1$, $k\in\Z$, and  the bracket
\begin{equation}
[e_k, e_m ] = (m-k) e_{m+k} .
\label{witt}
\end{equation}
This paper is devoted to one particular family of subalgebras in $W_n$.
\begin{dfn}
We call a vector $\mu\in\C^n$ {\it generic} if  $\mu \cdot r \neq 0$ for all $r \in\Z^n\backslash \{ 0 \}$.
\end{dfn}
\begin{dfn}
Let $\mu$ be a generic vector in $\C^n$ 
and let $d_\mu = \mu_1 d_1 + \ldots + \mu_n d_n$. 
A {\it solenoidal} Lie algebra $W_\mu$ is the subalgebra in $W_n$ which consists of vector fields
collinear to $\mu$ at each point of $\T^n$, $W_\mu = A d_\mu$.
\end{dfn}
The Lie bracket in $W_\mu$ is 
\begin{equation}
[t^r d_\mu, t^s d_\mu ] = \mu \cdot (s-r) t^{r+s} d_\mu ,
\label{sol}
\end{equation}
with $r, s \in \Z^n$. Here we are using the notation  $t^r = t_1^{r_1} \ldots t_n^{r_n}$ for
$r = (r_1,\ldots,r_n) \in \Z^n$.

Denote by $\Gamma_\mu$ the image of $\Z^n$ under the embedding $\Z^n \rightarrow \C$
given by $r \mapsto \mu \cdot r$. Then we can view the solenoidal Lie algebra $W_\mu$ as a
version of the Witt algebra $W_1$ where the indices of the basis elements $e_r$ run not over $\Z$,
but over the lattice $\Gamma_\mu \subset \C$. Here we make the identification $t^r d_\mu = e_{\mu \cdot r}$
and the formula \eqref{witt} remains valid.

 Let us now discuss modules for the solenoidal Lie algebras. A $W_\mu$-module $M$ is called a {\it weight} module
if $M = \mathop\oplus\limits_{\lambda\in\C} M_\lambda$, where the weight space $M_\lambda$ is defined as
$$M_\lambda = \left\{ v \in M \, | \, d_\mu v = \lambda v  \right\} .$$

In particular, $A$ is a weight $W_\mu$-module and $W_\mu$ is a weight module over itself.

Any weight $W_\mu$-module can be decomposed into a direct sum of submodules corresponding to distinct cosets of $\Gamma_\mu$
in $\C$. It is then sufficient to study modules supported on a single coset $\beta + \Gamma_\mu$, $\beta\in \C$.
We are going to impose such a condition on $M$ and fix $\beta$ for the rest of this paper.

\begin{dfn}
A weight module is called {\it cuspidal} if the dimensions of its weight spaces are uniformly bounded by some
constant.
\end{dfn}

 Let us construct a family of cuspidal $W_\mu$-modules. Fix $\alpha, \beta \in \C$.
\begin{dfn}
A module of {\it tensor fields} $T(\alpha, \beta)$ is a vector space with a basis $\{ v_s \, | \, s \in \beta + \Gamma_\mu \}$ and the following action
of $W_\mu$:
\begin{equation}
e_k v_s = (s + \alpha k) v_{s+k} , \quad k \in \Gamma_\mu, \; s \in \beta + \Gamma_\mu.
\label{tenden}
\end{equation}
\end{dfn}

 It is easy to check that $T(\alpha, \beta)$ is indeed a $W_\mu$-module. These modules are cuspidal since every weight space in
$T(\alpha, \beta)$ is 1-dimensional.

\begin{prp}
(1) The $W_\mu$-module $T(\alpha, \beta)$ is simple unless $\alpha = 0, 1$ and $\beta + \Gamma_\mu = \Gamma_\mu$. 

(2) The $W_\mu$-module $T(0,0)$ has a trivial 1-dimensional submodule $\C v_0$,
and the quotient  $\overline{T}(0,0) = T(0,0) / \C v_0$ is a simple $W_\mu$-module.

(3) Let $\{ v_s | s \in\beta+\Gamma_\mu \}$ be a basis of $T(0,\beta)$ and  $\{ v^\prime_s | s \in\beta+\Gamma_\mu \}$ be a basis of $T(1,\beta)$.
The map $\theta: T(0,\beta) \rightarrow T(1,\beta)$,
$$\theta( v_s) = s v^\prime_s$$
is a homomorphism of $W_\mu$-modules.

(4)  $\theta(T(0,0)) \cong \overline{T}(0,0)$ is a submodule of codimension $1$ in $T(1,0)$.
\label{red}
\end{prp}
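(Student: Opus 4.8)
The plan is to exploit the fact that $T(\alpha,\beta)$ is a weight module with one-dimensional weight spaces. Since $d_\mu = e_0$ acts on $v_s$ by the scalar $s$, the vector $v_s$ has weight $s$, and distinct basis vectors have distinct weights. A standard Vandermonde argument (applying powers of $d_\mu$) then shows that any submodule $N$ is a weight submodule, hence equals $\Span\{v_s : s \in S\}$ for some $S \subseteq \beta + \Gamma_\mu$. This reduces every structural question to a combinatorial analysis of which basis vectors can be reached from a given one under the action $e_k v_s = (s + \alpha k) v_{s+k}$.

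For part (1) the decisive observation is that $e_k v_s$ is a \emph{nonzero} multiple of $v_{s+k}$ exactly when $s + \alpha k \neq 0$. First I would record the two exceptional submodules that account for non-simplicity: when $\alpha = 0$ the line $\C v_0$ is annihilated, and when $\alpha = 1$ one checks directly that $\Span\{v_s : s \neq 0\}$ is closed under the action; each is a proper submodule precisely when $0 \in \beta + \Gamma_\mu$, i.e. $\beta + \Gamma_\mu = \Gamma_\mu$. For the converse, take $N \neq 0$ and fix $v_s \in N$. From $v_s$ one reaches $v_{s''}$ for every target except the single $s''$ with $s + \alpha(s'' - s) = 0$. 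When $\alpha \neq 0,1$, if the desired $v_{s'}$ is that obstructed value I would route through an intermediate $v_{s''}$: the two constraints "reachable from $s$" and "able to reach $s'$" each forbid only one value of $s''$, while $\beta + \Gamma_\mu$ is infinite, so a valid intermediary always exists. When $\alpha \in \{0,1\}$ but $0 \notin \beta + \Gamma_\mu$, the obstruction would force $s = 0$ or $s + k = 0$, neither of which occurs in $\beta + \Gamma_\mu$, so every target is reached in one step. This yields simplicity in all remaining cases.

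Part (2) follows from the same machinery: $\C v_0$ is a trivial submodule because $e_k v_0 = 0$, and in $\overline{T}(0,0)$ the surviving vectors $\bar v_s$ with $s \neq 0$ satisfy $e_k \bar v_s = s\, \bar v_{s+k}$ with $s \neq 0$, so the reachability argument gives simplicity at once. Parts (3) and (4) are direct verifications. For (3) I would compare $\theta(e_k v_s) = s(s+k)\, v'_{s+k}$ with $e_k \theta(v_s) = s(s+k)\, v'_{s+k}$ and observe they agree. For (4) I would note $\ker\theta = \C v_0$ and that the image $\Span\{s\, v'_s : s \in \Gamma_\mu\} = \Span\{v'_s : s \neq 0\}$ is exactly the codimension-one submodule of $T(1,0)$ found in part (1); the first isomorphism theorem then gives $\theta(T(0,0)) \cong T(0,0)/\C v_0 = \overline{T}(0,0)$.

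The main obstacle is the simplicity argument in part (1) for generic $\alpha$: one must be sure the two-step reachability genuinely covers the single obstructed target, which rests on the two forbidden intermediate indices being finite in number against the infinite lattice $\beta + \Gamma_\mu$. Everything else is routine bookkeeping with the defining formula.
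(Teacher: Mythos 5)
Your proof is correct and takes essentially the same approach as the paper: both reduce to weight-homogeneity of submodules and then analyze when the coefficient $s+\alpha k$ vanishes, your two-step routing through an intermediate $v_{s''}$ being precisely the paper's check that the unique obstructed target $v_m$ receives a nonzero arrow $e_k v_{m-k}$ for some $k\neq 0$ unless $\alpha=1$ and $m=0$. Parts (2)--(4) likewise match the paper's direct verifications.
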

\begin{proof}
Clearly, every submodule in $T(\alpha,\beta)$ is homogeneous with respect to the weight decomposition.
Thus $T(\alpha, \beta)$ is simple if and only if for every $s \in \beta+\Gamma_\mu$ the vector $v_s$
generates $T(\alpha, \beta)$. Let us asume $\alpha \neq 0$. Fix $s \in \beta+\Gamma_\mu$. Since
$$ e_k v_s = (s + \alpha k) v_{s+k}, $$
and $s + \alpha k$ may vanish only for $k = - s/\alpha$, we see that a $W_\mu$-submodule generated by $v_s$ is
either $T(\alpha,\beta)$ or has codimension 1 in $T(\alpha, \beta)$. The latter happens precisely when
for some $m\in \beta+\Gamma_\mu$
$$e_k v_{m-k} = (m-k+\alpha k) v_m = 0$$
for all non-zero $k \in \Gamma_\mu$. It follows from this that $T(\alpha, \beta)$ with $\alpha \neq 0$ has a submodule of codimension $1$
only when $\alpha =1$ and $m = 0 \in \beta + \Gamma_\mu$.

Consider now the case $\alpha = 0$. Then $e_k v_s = s v_{s+k}$, and every $v_s$ with $s \neq 0$ generates $T(0, \beta)$,
thus $T(0, \beta)$ could only have a 1-dimensional submodule $\C v_0$. This shows that $T(0, \beta)$ is reducible
if and only if $0 \in \beta + \Gamma_\mu$.
This establishes parts (1) and (2) of the proposition. Part (3) can be verified by a straightforward direct calculation. Claim (4) is then an immediate consequence of (3). 
\end{proof}

\section{$AW_\mu$-modules}
In this section we are going to describe the structure of cuspidal $W_\mu$-modules that admit a compatible action of the commutative 
algebra $A$ of functions on a torus.
\begin{dfn}
We call $M$ an $AW_\mu$-module if it is a module for both the solenoidal Lie algebra $W_\mu$ and the commutative unital algebra
$A = \C [t_1^{\pm 1}, \ldots, t_n^{\pm 1}]$ with these two structures being compatible:
\begin{equation}
x (f v) = (x f) v + f (x v), \quad f \in A, \; x \in W_\mu, \; v \in M.
\label{compat}
\end{equation}
\end{dfn} 

If an $AW_\mu$-module $M$ is a weight module then \eqref{compat} implies that the action of $A$ is compatible 
with the weight grading of $M$: $A_\gamma M_\lambda \subset M_{\lambda+\gamma}$, $\gamma \in \Gamma_\mu$,
$\lambda \in \beta + \Gamma_\mu$. Suppose an $AW_\mu$-module $M$ has a weight decomposition with one of the
weight spaces being finite-dimensional. Since all non-zero homogeneous elements of $A$ are invertible, we 
conclude that all weight spaces of $M$ have the same dimension and that $M$ is a free $A$-module of a finite
rank.

Let us now assume that $M$ is a cuspidal $AW_\mu$-module. Let $U = M_\beta$, $\dim U < \infty$. 
Since $M$ is a free $A$-module, it can be presented as $M \cong A \otimes U$.

For each $s \in \Z^n$ consider an operator
$$ D(s) : \, U \rightarrow U,$$
given by $D(s) = t^{-s} \circ (t^s d_\mu)$.
Note that $D(0) = \beta \Id$.

The finite-dimensional operator $D(s)$ completely determines the action of $t^s d_\mu$ on $M$ since
by \eqref{compat} 
\begin{equation}
(t^s d_\mu) (t^m u) = (t^s d_\mu t^m) u + t^m (t^s d_\mu u) = \mu \cdot m t^{m+s} u + t^{m+s} D(s) u .
\label{tensact}
\end{equation}
From \eqref{tensact} and the commutator relations \eqref{sol} we can easily derive the Lie bracket (cf., Lemma 1 in \cite{Jet}):
\begin{equation}
[D(s), D(m)] = \mu \cdot m \left( D(m+s) - D(m) \right) -  \mu \cdot s \left( D(m+s) - D(s) \right).
\label{deform}
\end{equation}
We are going to show that $D(s)$ can be expressed as a polynomial in $s = (s_1, \ldots, s_n)$.

Let us state two standard results on polynomial and rational functions on $\Z^n$. The proofs are elementary and we omit them. 
\begin{lem}
Let $P_1$, $P_2$ be two polynomials in $\C [x_1, \ldots, x_n]$ such that the degree of $P_1$, $P_2$ in each variable does not
exceed $K\in\N$. Let $B = B_1 \times \ldots \times B_n \subset \Z^n$ with $|B_i| > K$. If $P_1(x) = P_2(x)$ for all
$x \in B$ then $P_1 = P_2$ in  $\C [x_1, \ldots, x_n]$.
\end{lem}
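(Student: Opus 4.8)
The plan is to reduce the problem to the one-variable case and then induct on the number of variables $n$. First I would set $P = P_1 - P_2$, which has degree at most $K$ in each variable and vanishes at every point of $B$; the goal then becomes to prove that $P = 0$ in $\C[x_1, \ldots, x_n]$. For the base case $n = 1$ I would invoke the elementary fact that a nonzero univariate polynomial of degree at most $K$ has at most $K$ roots: since $P$ vanishes on $B_1$ and $|B_1| > K$, it has more than $K$ roots and must be identically zero.

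For the inductive step, assuming the statement for $n - 1$ variables, I would regard $P$ as a polynomial in the single variable $x_n$ with coefficients $c_j = c_j(x_1, \ldots, x_{n-1}) \in \C[x_1, \ldots, x_{n-1}]$, $0 \le j \le K$. Fixing an arbitrary point $(a_1, \ldots, a_{n-1}) \in B_1 \times \cdots \times B_{n-1}$, the specialization $P(a_1, \ldots, a_{n-1}, x_n)$ is a univariate polynomial of degree at most $K$ vanishing on all of $B_n$, hence zero by the base-case argument; this forces $c_j(a_1, \ldots, a_{n-1}) = 0$ for every $j$. Since the chosen point was arbitrary, each $c_j$ vanishes on the full product set $B_1 \times \cdots \times B_{n-1}$, and as $c_j$ has degree at most $K$ in each of its variables, the induction hypothesis yields $c_j = 0$ for all $j$, whence $P = 0$ and therefore $P_1 = P_2$.

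The argument is completely routine, and I do not anticipate any genuine obstacle. The only point requiring a modicum of care is the degree bookkeeping in the inductive step, namely verifying that each coefficient $c_j$ still has degree at most $K$ in each of the remaining variables so that the induction hypothesis applies; but this is immediate from the hypothesis that $P$ has degree at most $K$ in each variable separately. This is precisely why the lemma is stated with a bound on the degree in \emph{each} variable rather than on the total degree, and why the point set $B$ is required to be a product set: both features are exactly what make the variable-by-variable induction go through.
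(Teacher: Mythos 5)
Your proof is correct, and there is nothing to compare it against: the paper explicitly omits the proof of this lemma, remarking only that it is elementary. Your variable-by-variable induction --- reducing to the univariate root-counting bound and using the product structure of $B$ together with the per-variable degree bound to push the vanishing of the coefficients $c_j$ down to $n-1$ variables --- is exactly the standard argument the authors presumably had in mind, and your closing observation about why the hypotheses are stated per variable and on a product set is accurate.
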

\begin{cor}
Let $R_1 = {P_1}/{Q_1}$,    $R_2 = {P_2}/{Q_2}$ with $P_1, P_2, Q_1, Q_2 \in \C [x_1, \ldots, x_n]$
such that the degree of $P_1, P_2, Q_1, Q_2$ in each variable does not exceed $K$. 
Let $B = B_1 \times \ldots \times B_n \subset \Z^n$ with $|B_i| > 2K$ and suppose $Q_1(x), Q_2(x) \neq 0$
for all $x \in B$. If $R_1 (x) = R_2 (x)$ for all $x \in B$ then $R_1 = R_2$ in $\C(x_1, \ldots, x_n)$.
\label{rat}
\end{cor}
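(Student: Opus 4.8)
The plan is to reduce this statement about rational functions to the preceding Lemma by clearing denominators.

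First I would use the hypothesis that $Q_1(x), Q_2(x) \neq 0$ for all $x \in B$ to rewrite the pointwise identity $R_1(x) = R_2(x)$ in polynomial form. Indeed, for $x \in B$ the equation ${P_1(x)}/{Q_1(x)} = {P_2(x)}/{Q_2(x)}$ is equivalent, after multiplying through by the nonzero scalars $Q_1(x) Q_2(x)$, to
\begin{equation}
P_1(x) Q_2(x) = P_2(x) Q_1(x), \quad x \in B.
\label{cleardenom}
\end{equation}
Thus the two polynomials $P_1 Q_2$ and $P_2 Q_1$ agree on every point of $B$.

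The key observation is then a degree count. Since the degree of each of $P_1, P_2, Q_1, Q_2$ in each variable is at most $K$, the products $P_1 Q_2$ and $P_2 Q_1$ have degree at most $2K$ in each variable. As $|B_i| > 2K$ by hypothesis, the Lemma applies to the pair $P_1 Q_2$, $P_2 Q_1$ with the constant $2K$ playing the role of $K$, and we conclude from \eqref{cleardenom} that
$$ P_1 Q_2 = P_2 Q_1 \quad \text{in } \C[x_1, \ldots, x_n].$$

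Finally I would pass to the field of fractions. Because $Q_1$ and $Q_2$ do not vanish on $B$, they are nonzero polynomials, so their product $Q_1 Q_2$ is a nonzero element of $\C[x_1, \ldots, x_n]$ and hence invertible in $\C(x_1, \ldots, x_n)$. Dividing the polynomial identity $P_1 Q_2 = P_2 Q_1$ by $Q_1 Q_2$ yields ${P_1}/{Q_1} = {P_2}/{Q_2}$, that is, $R_1 = R_2$ in $\C(x_1, \ldots, x_n)$, as claimed. I do not anticipate any real obstacle here; the only points requiring care are the degree bound of $2K$ on the cleared-denominator polynomials and the observation that $Q_1, Q_2$ are genuinely nonzero as polynomials, which is what legitimizes both the reduction \eqref{cleardenom} and the final division in the field of fractions.
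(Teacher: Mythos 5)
Your proof is correct and is exactly the argument the paper intends: the paper omits the proof as elementary, and the hypothesis $|B_i| > 2K$ is calibrated precisely for your denominator-clearing step, since $P_1Q_2$ and $P_2Q_1$ have degree at most $2K$ in each variable. Your attention to the two side conditions (nonvanishing of $Q_1, Q_2$ on $B$ to justify clearing denominators, and their nonvanishing as polynomials to justify dividing in $\C(x_1,\ldots,x_n)$) is exactly what makes the reduction to the Lemma airtight.
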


\begin{thm}
Let $M$ be a cuspidal $AW_\mu$-module, $M = A \otimes U$, where $U = M_\beta$. Then the action of $W_\mu$ on
$M$ is given by
$$ (t^s d_\mu) (t^m u) = t^{m+s} (\mu\cdot m + D(s)) u, \quad u \in U,$$
where the family of operators $D(s): \, U \rightarrow U$ can be expressed as an $\End (U)$-valued polynomial in
$s = (s_1, \ldots, s_n)$ with the constant term $D(0) = \beta \Id$.
\label{poly}
\end{thm}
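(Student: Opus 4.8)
The plan is to reduce the statement to the assertion that the matrix entries of $D(s)$, in a fixed basis of the finite-dimensional space $U$, are polynomials in $s_1,\ldots,s_n$. The action formula itself is immediate: combining $D(s)=t^{-s}\circ(t^sd_\mu)$ with \eqref{tensact} gives $(t^sd_\mu)(t^mu)=t^{m+s}(\mu\cdot m+D(s))u$, and $D(0)=\beta\Id$ was already observed, so it is forced to be the constant term. Everything then hinges on the polynomial dependence of $D$ on $s$, and the engine will be the deformed bracket \eqref{deform} together with the genericity of $\mu$.

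First I would exploit genericity: for $r\neq s$ in $\Z^n$ one has $\mu\cdot r\neq\mu\cdot s$, so \eqref{deform} can be solved for $D(s+m)$ whenever $s\neq m$. The cleanest way to extract a recurrence is to specialize $m=\varepsilon_i$, the $i$-th standard basis vector of $\Z^n$. Writing $E_i=D(\varepsilon_i)$, this yields, for every $s$ with $s\neq\varepsilon_i$, a relation of the form \[(\mu_i-\mu\cdot s)\,\big(D(s+\varepsilon_i)-D(s)\big)=[\,D(s),E_i\,]+\mu_i\big(E_i-D(s)\big).\] Thus along each line $\{s_0+k\varepsilon_i : k\in\Z\}$ the operators $D(s_0+k\varepsilon_i)$ satisfy a first-order inhomogeneous linear difference equation in $k$, whose scalar coefficient $\mu_i-\mu\cdot s$ is affine in $k$ with nonzero slope $-\mu_i$ (hence vanishes for at most one integer $k$), and whose homogeneous part is governed by the single fixed operator $\ad_{E_i}+\mu_i\Id$ acting on the finite-dimensional space $\End(U)$.

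The crux — and the step I expect to be the main obstacle — is to show that this difference equation forces $k\mapsto D(s_0+k\varepsilon_i)$ to be polynomial in $k$, of degree bounded by a constant $K$ depending only on $\dim U$ (indeed only on the spectrum of $\ad_{E_i}+\mu_i\Id$), uniformly over all lines. The general solution of such an equation lies in a finite-dimensional space whose non-polynomial members exhibit rational/hypergeometric growth that is incompatible with $D$ being, for every $k\in\Z$ in both directions, the honest finite matrix of a genuine operator. Cuspidality enters exactly here, since it guarantees that $U$ is finite-dimensional and that the two-sided sequence is defined everywhere while simultaneously satisfying the entire family of relations \eqref{deform}, not merely the $\varepsilon_i$-recurrence; it is this overdetermination that selects the polynomial solutions. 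This is precisely the analysis carried out in \cite{Jet}, whose method I would follow, using Corollary \ref{rat} to compare the rational expressions produced by the recurrence and to justify that polynomial identities obtained on a finite box propagate to all of $\Z^n$.

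Finally I would assemble the one-variable conclusions into joint polynomiality. Having polynomial dependence of degree $\le K$ along every line parallel to each coordinate axis, I would fix a box $B=B_1\times\cdots\times B_n\subset\Z^n$ with each $|B_i|>K$, interpolate to obtain a single $\End(U)$-valued polynomial $P(s)$ agreeing with $D(s)$ for $s\in B$, and then invoke the polynomial interpolation lemma preceding Corollary \ref{rat}, applied entrywise, to conclude that $P=D$ on all of $\Z^n$; the line-wise agreement supplies exactly the box-wise data the lemma requires. Since $P(0)=D(0)=\beta\Id$, the constant term is as claimed, which completes the argument.
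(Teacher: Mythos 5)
Your reduction is sound in outline: the action formula and $D(0)=\beta\Id$ are indeed immediate from \eqref{tensact}, and your final assembly step (polynomiality of uniformly bounded degree along all axis-parallel lines implies joint polynomiality, via interpolation on a box and the lemma preceding Corollary \ref{rat}) is correct. The genuine gap is the step you yourself flag as the crux: you never prove that the recurrence $(\mu_i-\mu\cdot s)\bigl(D(s+\epsilon_i)-D(s)\bigr)=[D(s),E_i]+\mu_i(E_i-D(s))$, with $E_i=D(\epsilon_i)$, forces polynomiality along a shifted line $\{s_0+k\epsilon_i : k\in\Z\}$, and the mechanism you sketch cannot work as stated. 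Away from the at most one singular value of $k$, the recurrence determines the two-sided sequence from a single initial operator, so \emph{every} initial condition yields an everywhere-defined two-sided solution; finite-dimensionality of $U$ and mere existence of the sequence impose no growth restriction, and generic initial data produce non-polynomial solutions. Hence cuspidality must enter through the full overdetermined system \eqref{deform}, not through the one recurrence --- but your appeal to \cite{Jet} does not supply this either: Theorem 1 of \cite{Jet} applies to the operators $D(km)$, $k\in\Z$, attached to a genuine $W_1$-subalgebra $\Span\{t^{km}d_\mu \,|\, k\in\Z\}$, i.e., to lines \emph{through the origin}. For $s_0\notin\Z\epsilon_i$ the elements $t^{s_0+k\epsilon_i}d_\mu$ do not span a Lie subalgebra (brackets leave the line), so the family $D(s_0+k\epsilon_i)$ is not the $D$-family of any cuspidal $AW_1$-module and the method of \cite{Jet} does not transfer. (Your uniform degree bound $K$ via the spectrum of $\ad(E_i)+\mu_i\Id$ is also only asserted, though it would plausibly be recoverable if line-wise polynomiality were known.)

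The paper circumvents exactly this obstacle. It invokes \cite{Jet} only where it legitimately applies --- polynomiality of $x\mapsto D(xm)$ along lines through the origin --- and then inducts on $n$: by the induction hypothesis $D(x_1,\ldots,x_{n-1},0)$ is polynomial and $D(0,\ldots,0,x_n)$ is polynomial, so \eqref{deform} applied to this pair exhibits $L_1(x)D(x)$ as a polynomial, where $L_1(x)=\mu_nx_n-\mu_{n-1}x_{n-1}-\cdots-\mu_1x_1$ vanishes on $\Z^n$ only at $x=0$ by genericity of $\mu$. A second choice of pair gives $L_2(x)D(x)$ polynomial for a different linear form $L_2$. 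Corollary \ref{rat} then forces $P_1/L_1=P_2/L_2$ as rational functions, and since $L_1$ and $L_2$ are coprime in the unique factorization domain $\C[x_1,\ldots,x_n]$, $L_1$ divides $P_1$, so $D$ agrees with a polynomial on $\Z^n\setminus\{0\}$; the constant term is identified with $D(0)$ by restricting to the $x_1$-axis and using the $W_1$ result once more. To repair your proof you should replace the shifted-line analysis with this two-denominator divisibility argument (or else supply an independent proof of polynomiality along lines off the origin, which is essentially a new result not contained in \cite{Jet}).
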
 
\begin{proof}
We are going to prove this theorem by induction on $n$. In case when $n=1$, the solenoidal Lie algebra $W_\mu$ is isomorphic
to the Witt algebra $W_1$, and the claim follows from Theorem 1 in \cite{Jet}. 
Applying this result to a subalgebra spanned by $\left\{ t^{jm} d_\mu \, | j \in \Z \right\}$ with a fixed 
$m\in\Z^n \backslash \{ 0 \}$, which is isomorphic to $W_1$, we conclude that the family $\{ D(x m) \, | \, x \in \Z \}$ 
may be expressed as a polynomial in $x$. 

Now let us establish the step of induction. By induction assumption the operators $D(x_1, \ldots, x_{n-1}, 0)$ have
polynomial dependence on $x_1, \ldots, x_{n-1}$, while $D(0, \ldots, 0, x_n)$ is a polynomial in $x_n$. Thus
\begin{align*}
&[D(x_1, \ldots, x_{n-1}, 0),  D(0, \ldots, 0, x_n)] + \mu_n x_n D(0, \ldots, 0, x_n) \\
& \quad - (\mu_1 x_1 + \ldots \mu_{n-1} x_{n-1}) D(x_1, \ldots, x_{n-1}, 0) \\
&=  (\mu_n x_n - \mu_{n-1} x_{n-1} - \ldots - \mu_1 x_1) D(x_1, \ldots, x_{n-1}, x_n)
\end{align*}
is a polynomial in $x_1, \ldots, x_n$.

Since $\mu$ is generic then linear function on $\Z^n$
$$L_1(x) = \mu_n x_n - \mu_{n-1} x_{n-1} - \ldots - \mu_1 x_1$$ 
vanishes  only at $x = 0$. Thus for all $x \in \Z^n \backslash \{ 0 \}$, $ D(x_1, \ldots,  x_n)$ can be expressed as a rational
function $\frac{P_1 (x)}{L_1 (x)}$ with $P_1 \in \End (U) \otimes \C[x_1, \ldots, x_n]$.

Likewise,
\begin{align*}
& [D(x_1, \ldots, x_{n-1}-x_n, 0),  D(0, \ldots, x_n, x_n)] 
+ (\mu_{n-1} + \mu_n) x_n D(0, \ldots, x_n, x_n) \\
& \quad - (\mu_1 x_1 + \ldots + \mu_{n-1} x_{n-1}- \mu_{n-1} x_{n-1}) D(x_1, \ldots, x_{n-1}-x_n, 0) \\
&=  (\mu_n x_n + 2 \mu_{n-1} x_{n} - \mu_{n-1} x_{n-1} - \ldots - \mu_1 x_1) D(x_1, \ldots, x_{n-1}, x_n)
\end{align*}
is a polynomial in $x_1, \ldots, x_n$ and on the set $ \Z^n \backslash \{ 0 \}$, $ D(x_1, \ldots,  x_n)$ may be expressed as a rational
function $\frac{P_2 (x)}{L_2 (x)}$, where
$$L_2 (x) = \mu_n x_n  + 2 \mu_{n-1} x_{n} - \mu_{n-1} x_{n-1} - \ldots - \mu_1 x_1 .$$

By Corollary \ref{rat} we have
$\frac{P_1 (x)}{L_1 (x)} = \frac{P_2 (x)}{L_2 (x)}$
in  $\End (U) \otimes \C(x_1, \ldots, x_n)$, and hence $P_1(x) L_2(x) = P_2(x) L_1(x)$.
Since $\C[x_1, \ldots, x_n]$ is a unique factorization domain and the greatest common divisor of $L_1(x)$ and $L_2 (x)$
is $1$, we conclude that $P_1(x)$ is divisible by $L_1(x)$. Hence $D(x_1, \ldots, x_n)$ coincides with some 
polynomial on the set  $\Z^n \backslash \{ 0 \}$. It only remains to show that the value of this polynomial at $x=0$
coincides with $D(0)$. However the constant term of this polynomial is the same as the constant term 
of the polynomial $D(x_1, 0, \ldots, 0)$, and the constant term of the latter polynomial is $D(0)$ by the result for $W_1$.
This completes the proof of the theorem.
\end{proof}

 Expand the polynomial $D(s)$ in $s$:
$$D(s) = \sum_{k \in \Z_+^n} \frac{s^k}{k!} \del^k D, $$
where $\del^k D \in \End (U)$ with only a finite number of these operators being non-zero.
Here $k! = k_1 ! \times \ldots \times k_n !$.
Expanding the commutator $[D(s), D(m)]$ in $s$, $m$,
\begin{align*}
&\sum_{k, r \in \Z_+^n} \frac{s^k}{k!} \frac{m^r}{r!} [\del^k D, \del^r D] \\
= \left( \sum_{i=1}^n \mu_i m_i \right) 
&\sum_{p\in\Z_+^n} \frac{ (m+s)^p - m^p}{p!} \del^p D
-  \left( \sum_{i=1}^n \mu_i s_i \right) 
\sum_{p\in\Z_+^n} \frac{ (m+s)^p - s^p}{p!} \del^p D,
\end{align*}
we get the Lie brackets between $\del^k D$ by equating the coefficients at $\frac{s^k}{k!} \frac{m^r}{r!}$:
$$ [\del^k D, \del^r D] = \left\{
\begin{matrix}
&\sum\limits_{i=1}^n \mu_i (r_i - k_i) \del^{k+r-\epsilon_i} D &\text{ if \ } k,r \neq 0, \\
&0 \hspace{3cm} &\text{ if \ } k=0 \text{\rm \ or \ } r = 0. \hfill \\
\end{matrix}
\right.$$
Here $\epsilon_i$ is the $i$-th standard basis vector of $\Z^n$. We can identify 
$\Span \left\{ \del^k D \, | \, k \in \Z_+^n \backslash \{ 0 \} \right\} $
with a subalgebra in $\Der \C [x_1, \ldots, x_n]$, where
$\del^k D \mapsto x^k \del_\mu$, $\del_\mu = \mu_1 \frac{\del}{\del x_1} + \ldots + \mu_n \frac{\del}{\del x_n}$.
Set $\LL = A \del_\mu \subset \Der \C [x_1, \ldots, x_n]$. The Lie algebra $\LL$ has a $\Z$-grading defined by 
assigning $\deg (x_i) = 1$, $i = 1, \ldots, n$, $\deg \del_\mu = -1$:
$$ \LL = \mathop\oplus\limits_{j=-1}^\infty \LL_j .$$
Then $\Span \left\{ \del^k D \, | \, k \in\Z_+^n \backslash \{ 0 \} \right\}$ is identified with the subalgebra
$$\LL_+ = \mathop\oplus\limits_{j=0}^\infty \LL_j .$$
We obtained the following result
\begin{thm}
(cf., \cite{Jet}, Theorem 3)
There exists an isomorphism between the category of cuspidal $AW_\mu$-modules with support $\beta+\Gamma_\mu$
and the category of finite-dimensional $\LL_+$-modules $(U,\rho)$ satisfying 
\begin{align}
\rho(x^k \del_\mu) = 0 \quad\text{ \ for \ } k_1+\ldots+k_n \gg 0.
\label{locn}
\end{align}
Given such an $\LL_+$-module $U$, we associate to it an $AW_\mu$-module
$$M = A \otimes U$$
with the following action of $W_\mu$:
\begin{equation}
(t^s d_\mu) (t^m \otimes u) = t^{m+s} \otimes \left( (\mu\cdot m + \beta) \Id + \sum_{k\in\Z_+^n \backslash \{ 0 \}}
\frac {s^k}{k!} \rho(x^k \del_\mu) \right) u .
\end{equation}
\label{AW}
\end{thm}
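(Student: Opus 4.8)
The plan is to construct two mutually inverse functors between the two categories, using the bracket computation carried out just above this theorem to certify that each functor lands in the correct target.

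First I would describe the functor from cuspidal $AW_\mu$-modules to $\LL_+$-modules. Given such an $M$, Theorem \ref{poly} presents it as $M = A\otimes U$ with $U = M_\beta$ and identifies the action with the polynomial family $D(s)$. Writing $D(s) = \sum_k \frac{s^k}{k!}\del^k D$, I would set $\rho(x^k\del_\mu) = \del^k D$ for $k\in\Z_+^n\backslash\{0\}$. The bracket relations for the $\del^k D$ obtained above by expanding \eqref{deform} are precisely the defining relations of $\LL_+$ under the identification $\del^k D \leftrightarrow x^k\del_\mu$, so $(U,\rho)$ is an $\LL_+$-module; and since $D(s)$ is a polynomial, only finitely many $\del^k D$ are nonzero, which is exactly the locality condition \eqref{locn}. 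A morphism of $AW_\mu$-modules restricts to a linear map on the $\beta$-weight spaces that commutes with every $t^s d_\mu$, hence with every $D(s)$ and every $\del^k D$, so it is a morphism of $\LL_+$-modules.

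Next I would build the reverse functor. Given a finite-dimensional $(U,\rho)$ satisfying \eqref{locn}, set $M = A\otimes U$ with $A$ acting by multiplication on the first factor and $W_\mu$ acting by the displayed formula, and put $D(s) = \beta\,\Id + \sum_{k\neq 0}\frac{s^k}{k!}\rho(x^k\del_\mu)$, a polynomial in $s$ by \eqref{locn}. The compatibility \eqref{compat} between $A$ and $W_\mu$ is immediate from the shape of the action. The only substantive point is that $W_\mu$ acts as a Lie algebra, i.e.\ that the operators $t^s d_\mu$ satisfy \eqref{sol}; by \eqref{tensact} this is equivalent to the relation \eqref{deform} for $D(s)$. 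Expanding \eqref{deform} in $s$ and $m$ and matching coefficients of $\frac{s^k}{k!}\frac{m^r}{r!}$ --- reading the computation preceding the theorem backwards --- reduces it to the identities $[\del^k D, \del^r D] = \sum_i \mu_i(r_i - k_i)\del^{k+r-\epsilon_i}D$ for $k, r\neq 0$ together with the centrality of $\del^0 D = \beta\,\Id$. The former hold because $\rho$ is an $\LL_+$-homomorphism, the latter because $\beta\,\Id$ is scalar; moreover $k+r-\epsilon_i\neq 0$ whenever $k, r\neq 0$, so every term on the right again lies in $\LL_+$ and no ill-defined index arises. This functor sends a morphism $U\to U'$ to its $A$-linear extension $A\otimes U\to A\otimes U'$, which is automatically $W_\mu$-equivariant.

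Finally I would verify that the two functors are mutually inverse. Starting from $(U,\rho)$, the module $A\otimes U$ has $\beta$-weight space $1\otimes U$, canonically identified with $U$, and the operators $\del^k D$ read off from its action are exactly $\rho(x^k\del_\mu)$, returning us to $(U,\rho)$. Starting from $M$, Theorem \ref{poly} guarantees that the action reconstructed from $U = M_\beta$ and the $\del^k D$ coincides with the original action on all of $M$, returning us to $M$. I expect the main obstacle to be the central verification in the reverse functor that \eqref{deform} holds for the reconstructed polynomial $D(s)$; everything else is bookkeeping with weight spaces and $A$-linearity. Since this verification reduces precisely to the coefficient identities already established before the theorem, the argument amounts to reading that computation in both directions.
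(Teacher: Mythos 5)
Your proposal is correct and takes essentially the same route as the paper, which obtains Theorem \ref{AW} directly by combining Theorem \ref{poly} with the coefficient expansion of \eqref{deform} identifying the operators $\del^k D$ with the basis $x^k \del_\mu$ of $\LL_+$ (deferring the remaining routine verifications to \cite{Jet}, Theorem 3). Your explicit check of the inverse functor --- reading the coefficient computation backwards, using the centrality of $\del^0 D = \beta\,\Id$ and the polynomiality guaranteed by \eqref{locn} --- is precisely the bookkeeping the paper leaves implicit.
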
 
\begin{rem}
We shall see below that condition \eqref{locn} automatically holds in every finite-dimensional $\LL_+$-module $U$ and hence can be dropped 
from the statement of the theorem.
\end{rem}
\begin{rem}
The Lie algebra $\LL$ is the jet Lie algebra for the solenoidal Lie algebra $W_\mu$ (see \cite{BIM} for the definition of the jet Lie algebra).
\end{rem}

Now let us focus on the description of simple cuspidal $AW_\mu$-modules. According to Theorem \ref{AW}, such modules correspond to 
simple finite-dimensional $\LL_+$-modules satisfying \eqref{locn}.
\begin{thm}
(1) The commutant $\LL_+^{(1)}$ has codimension 1 in $\LL_+$:
$$ \LL_+^{(1)} = \LL_0^{(1)} \oplus \sum_{j=1}^\infty \LL_j ,$$
where the commutant $\LL_0^{(1)}$ of $\LL_0$ is an abelian subalgebra
$$\LL_0^{(1)} = \left\{ \sum_{i=1}^n c_i x_i \del_\mu \, \bigg| \, \sum_{i=1}^n \mu_i c_i = 0 \right\} .$$

(2) Every finite-dimensional module $U$ for $\LL_+$ satisfies the condition \eqref{locn}.

(3) Every finite-dimensional simple $\LL_+$-module has dimension 1.

(4) All 1-dimensional representations of $\LL_+$ are parametrized by $\alpha\in\C$ with the action
of $\LL_+$ given by $x_i \del_\mu \mapsto \alpha \mu_i$, $i = 1, \ldots, n$, 
$\LL_j \mapsto 0$ for $j \geq 1$.
\label{irL}
\end{thm}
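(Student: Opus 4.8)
The plan is to pass first to linear coordinates that trivialize the direction $\mu$, and then to treat the four parts in the order (1), (2), (4), (3), since (3) becomes routine once (2) is available. As $\del_\mu = \sum_i \mu_i \del/\del x_i$ is a nonzero constant vector field, I can choose linear coordinates $y_1,\dots,y_n$ (each homogeneous of degree $1$ in the $x_i$, so that the grading of $\LL$ by total degree is unchanged) in which $\del_\mu = \del_{y_1}$; e.g. $y_1 = x_1/\mu_1$ and $y_i = x_i - (\mu_i/\mu_1)x_1$ for $i\ge 2$, using that genericity forces every $\mu_i\neq 0$. In these coordinates the bracket collapses to $[y^k\del_{y_1}, y^r\del_{y_1}] = (r_1-k_1)\,y^{k+r-\epsilon_1}\del_{y_1}$, with $\LL_0 = \Span\{y_i\del_{y_1}\}$ and $\LL_j = \Span\{y^k\del_{y_1} : |k| = j+1\}$, where $|k|=k_1+\dots+k_n$.

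For part (1), a direct bracket computation gives $[y_1\del_{y_1}, y_j\del_{y_1}] = -y_j\del_{y_1}$ and $[y_i\del_{y_1}, y_j\del_{y_1}] = 0$ for $i,j\ge 2$, so $\LL_0^{(1)} = [\LL_0,\LL_0] = \Span\{y_j\del_{y_1} : j\ge 2\}$ is abelian of codimension $1$ in $\LL_0$; this is exactly the space in the statement, the hyperplane $\sum_i\mu_i c_i = 0$ being the preimage of $\Span\{y_j\del_{y_1}:j\ge2\}$ under the coordinate change. To see $\LL_j\subseteq\LL_+^{(1)}$ for $j\ge1$, set $h = y_1\del_{y_1}$ and use $[h, y^k\del_{y_1}] = (k_1-1)y^k\del_{y_1}$: every monomial with $k_1\neq 1$ lies in $[\LL_0,\LL_j]$, and for a monomial with $k_1 = 1$ (so some $k_i\ge1$, $i\ge2$) I first note that $y^{k-\epsilon_i+\epsilon_1}\del_{y_1}$ has first exponent $2$ and hence lies in $\LL_+^{(1)}$, then recover $y^k\del_{y_1}$ as a nonzero multiple of $[y_i\del_{y_1}, y^{k-\epsilon_i+\epsilon_1}\del_{y_1}]\in[\LL_+,\LL_+^{(1)}]\subseteq\LL_+^{(1)}$. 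Since $[\LL_i,\LL_j]\subseteq\LL_{i+j}$ and $\LL_+^{(1)}$ is graded, its degree-$0$ part is only $[\LL_0,\LL_0]$, which yields the claimed direct sum and codimension $1$.

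For part (2), the decisive observation is that the elements $L_a := y_1^{a+1}\del_{y_1}$ with $a\ge0$ together with $h = L_0$ satisfy $[h, L_a] = aL_a$. In a finite-dimensional module $(U,\rho)$ the operator $\rho(h)$ has finitely many eigenvalues, so $\ad\rho(h)$ on $\End(U)$ has only finitely many eigenvalues, namely their pairwise differences; as $\rho(L_a)$ is an $\ad\rho(h)$-eigenvector of eigenvalue $a$, it must vanish for every integer $a$ exceeding some bound $B$, i.e. $\rho(y_1^d\del_{y_1}) = 0$ for $d\ge B$. I then transfer this to arbitrary monomials of the same total degree via $[y_i\del_{y_1}, y^r\del_{y_1}] = r_1\, y^{r+\epsilon_i-\epsilon_1}\del_{y_1}$ for $i\ge2$, which moves one unit of degree from $y_1$ into $y_i$ while preserving $|r|$: starting from $y_1^{d}\del_{y_1}$ and applying such brackets repeatedly reaches any $y^k\del_{y_1}$ with $|k|=d$, up to the nonzero scalar $d!/k_1!$. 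Hence $\rho(y^k\del_{y_1}) = 0$ whenever $|k|\ge B$, that is $\rho(\LL_j) = 0$ for $j\ge B-1$, which is condition \eqref{locn}. I expect this transfer step to be the main obstacle, precisely because the pure-$y_1$ eigenvalue argument controls only operators of large first exponent, and one must argue that vanishing propagates across each fixed total degree.

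Parts (4) and (3) are then short. By part (1), $\LL_+/\LL_+^{(1)}\cong\C$, so any one-dimensional representation annihilates $\LL_+^{(1)}$ and is determined by a single scalar; since it kills $\LL_0^{(1)}=\{\sum c_i x_i\del_\mu : \sum\mu_i c_i=0\}$, the induced functional on $\LL_0$ has the form $c\mapsto \alpha\sum_i\mu_i c_i$, giving $x_i\del_\mu\mapsto\alpha\mu_i$ and $\LL_j\mapsto0$ for $j\ge1$, which proves (4). For (3), part (2) shows $\rho$ factors through the finite-dimensional quotient $\mathfrak g = \bigoplus_{0\le j<B-1}\LL_j$; here $\bigoplus_{1\le j<B-1}\LL_j$ is a nilpotent ideal (positively graded with bounded degrees) whose quotient is $\LL_0$, and $\LL_0$ is solvable (it is $\C h$ extending the abelian ideal $\LL_0^{(1)}$), so $\mathfrak g$, and hence $\rho(\LL_+)$, is solvable. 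Lie's theorem then provides a common eigenvector, i.e. a one-dimensional submodule, and simplicity forces $\dim U = 1$. I deliberately route (3) through solvability rather than a direct Engel-type argument, because the operators $\rho(y^k\del_{y_1})$ with $k_1=1$ need not be nilpotent, so the positive part does not act nilpotently on the nose.
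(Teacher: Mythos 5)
Your proposal is correct, and while it follows the same overall architecture as the paper's proof (direct computation of the commutant for part (1); an eigenvalue argument killing high-degree components for part (2); solvability of a truncated quotient plus Lie's theorem for part (3); parametrization through $\LL_+/\LL_+^{(1)}$ for part (4)), its execution differs in a genuinely useful way: you straighten $\del_\mu$ to a coordinate derivation $\del_{y_1}$ by a degree-preserving linear change of variables -- legitimate, since genericity applied to the standard basis vectors gives every $\mu_i\neq 0$. This makes each bracket exactly monomial, $[y^k\del_{y_1},y^r\del_{y_1}]=(r_1-k_1)\,y^{k+r-\epsilon_1}\del_{y_1}$, so where the paper must work with the leading-term identity \eqref{lead}, whose correction term $f(x)\del_\mu$ forces a descending induction on $k_j$ in part (1) and a generalized-eigenvector analysis in part (2), you get honest $\ad(y_1\del_{y_1})$-eigenvectors and short exact computations. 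In part (2) the paper invokes Lemma 2 of \cite{Jet} for $n$ separate directions and concludes $\rho(\LL_p)=0$ only for $p\geq nm$; you instead prove the needed spectral input yourself (the eigenvalues of $\ad\rho(h)$ are differences of eigenvalues of $\rho(h)$, via Jordan decomposition over $\C$) for the single family $y_1^{a+1}\del_{y_1}$, and then propagate vanishing across each fixed total degree using $[y_i\del_{y_1},y^r\del_{y_1}]=r_1\,y^{r+\epsilon_i-\epsilon_1}\del_{y_1}$, picking up the nonzero factor $d!/k_1!$; this transfer step, which you rightly flag as the crux, is sound because all intermediate first exponents stay $\geq k_1+1>0$. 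What the paper's route buys is symmetric treatment of all coordinate directions and statements read off directly in the original variables; what yours buys is self-containedness (no appeal to \cite{Jet}) and a sharper, $n$-independent vanishing bound $\rho(\LL_j)=0$ for $j\geq B-1$. Your explicit justification in part (3) that the truncation is solvable (a nilpotent positively graded ideal with quotient $\LL_0$, itself an extension of $\C h$ by the abelian $\LL_0^{(1)}$), which the paper merely asserts, is also correct, as is your check that the hyperplane $\sum_i\mu_ic_i=0$ is carried to $\Span\{y_j\del_{y_1}:j\geq 2\}$ under the coordinate change.
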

\begin{cor}
Every simple cuspidal $AW_\mu$-module is isomorphic to a tensor module $T(\alpha, \beta)$ for some $\alpha, \beta \in \C$.
\label{simpleAW}
\end{cor}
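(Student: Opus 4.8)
The plan is to combine Theorem \ref{irL} with Corollary \ref{simpleAW}'s hypothesis by tracing through the equivalence of categories established in Theorem \ref{AW}. Let $M$ be a simple cuspidal $AW_\mu$-module with support $\beta+\Gamma_\mu$. By Theorem \ref{AW} (and the Remark that lets us drop condition \eqref{locn} in view of Theorem \ref{irL}(2)), the module $M$ corresponds to a finite-dimensional $\LL_+$-module $(U,\rho)$ via $M = A\otimes U$. The key point is that simplicity of $M$ as an $AW_\mu$-module should correspond to simplicity of $U$ as an $\LL_+$-module, since Theorem \ref{AW} asserts an \emph{isomorphism} of categories and such isomorphisms preserve submodule lattices. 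Thus $U$ is a finite-dimensional simple $\LL_+$-module.

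Next I would invoke Theorem \ref{irL}(3), which tells me that every finite-dimensional simple $\LL_+$-module is $1$-dimensional, so $\dim U = 1$. Then Theorem \ref{irL}(4) classifies all such $1$-dimensional representations: they are parametrized by $\alpha\in\C$ with $x_i\del_\mu$ acting as $\alpha\mu_i$ and $\LL_j$ acting as $0$ for $j\geq 1$. I would then substitute this action back into the explicit formula from Theorem \ref{AW}. Since $\rho(x_i\del_\mu) = \alpha\mu_i$ and all higher $\rho(x^k\del_\mu)$ vanish, the sum $\sum_{k}\frac{s^k}{k!}\rho(x^k\del_\mu)$ collapses to $\sum_{i=1}^n s_i\alpha\mu_i = \alpha(\mu\cdot s)$. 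Writing $M = A\otimes U = \bigoplus_{m}\C(t^m\otimes u)$ for a fixed basis vector $u$ of $U$, the action becomes $(t^sd_\mu)(t^m\otimes u) = t^{m+s}\otimes(\mu\cdot m + \beta + \alpha\,\mu\cdot s)u$.

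The final step is to match this against the definition of $T(\alpha,\beta)$ in \eqref{tenden}. Under the identifications $t^sd_\mu = e_{\mu\cdot s}$ and the substitution of weights, I would set $v_{\mu\cdot m+\beta} := t^m\otimes u$, so that the weight label $s$ in \eqref{tenden} corresponds to $\mu\cdot m + \beta$. With $k = \mu\cdot s$, the coefficient $\mu\cdot m + \beta + \alpha(\mu\cdot s)$ rewrites as $(\mu\cdot m + \beta) + \alpha k = s_{\mathrm{new}} + \alpha k$, exactly reproducing \eqref{tenden}. This yields an isomorphism $M\cong T(\alpha,\beta)$, completing the proof.

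The main obstacle, and the step requiring the most care, is justifying that simplicity transfers correctly across the categorical equivalence of Theorem \ref{AW} — specifically that a simple $AW_\mu$-module corresponds to a simple $\LL_+$-module rather than merely an indecomposable one. An isomorphism of categories preserves the entire lattice of subobjects, so this is automatic, but I would make sure the functor genuinely sends $AW_\mu$-submodules of $M$ to $\LL_+$-submodules of $U$ bijectively; this follows from the construction $M = A\otimes U$ being $A$-free, so that $AW_\mu$-submodules are exactly $A\otimes U'$ for $\LL_+$-submodules $U'\subseteq U$. The remaining bookkeeping — the collapse of the polynomial $D(s)$ to its linear term and the reindexing of weights — is routine once the simple case $\dim U = 1$ is in hand.
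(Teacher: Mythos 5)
Your proposal is correct and follows essentially the same route as the paper: combine the correspondence of Theorem \ref{AW} with Theorem \ref{irL}(3),(4) to reduce to a one-dimensional $\LL_+$-module, collapse $D(s)$ to $\beta + \alpha\,\mu\cdot s$, and identify the result with $T(\alpha,\beta)$ via $v_{\mu\cdot m+\beta} = t^m\otimes u$. The only difference is that you spell out the transfer of simplicity across the correspondence (via $A$-freeness, submodules of $A\otimes U$ being exactly $A\otimes U'$), which the paper leaves implicit in the phrase ``combining Theorems \ref{AW} and \ref{irL}''; that justification is sound.
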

\begin{proof}
Combining Theorems \ref{AW} and \ref{irL}, we see that a simple cuspidal $AW_\mu$-module $M$ is isomorphic to
$A \otimes U$, where $U = \C u$ with the action 
$$(t^s d_\mu) (t^m \otimes u) = (\mu\cdot m + \beta + \alpha \mu \cdot s) t^{m+s} \otimes u .$$
This module is isomorphic to $T(\alpha, \beta)$.
\end{proof}
{\it Proof of Theorem \ref{irL}.}
First of all, let us compute the commutant  of $\LL_0$. It is easy to see that the commutators
$$ [x_i \del_\mu, x_j \del_\mu ] = \mu_j x_i \del_\mu - \mu_i x_j \del_\mu$$
span the subspace
$$\left\{ \sum_{i=1}^n c_i x_i \del_\mu \; \bigg| \; \sum_{i=1}^n \mu_i c_i = 0 \right\} .$$
Next, let us show that for every $p \geq 1$, $[ \LL_0, \LL_p ] = \LL_p$. Note that
\begin{equation}
[x_j \del_\mu, x^k \del_\mu ] = \mu_j (k_j -1) x^k \del_\mu + f(x) \del_\mu ,
\label{lead}
\end{equation}
where all monomials in $f$ have degree $k_j + 1$ in $x_j$. Using a descending induction on $k_j$
we conclude that $x^k \del_\mu \in \LL_+^{(1)}$ if $k_j \geq 2$. It remains to show that
$x_{i_1} x_{i_2} \ldots x_{i_{p+1}} \del_\mu \in \LL_+^{(1)}$, $i_1 < \ldots < i_{p+1}$.

We have
$$[x_{i_1} \del_\mu, x_{i_2}^2 x_{i_3} \ldots x_{i_{p+1}} \del_\mu] =
2 \mu_{i_2} x_{i_1} x_{i_2} \ldots x_{i_{p+1}} \del_\mu + g(x) \del_mu ,$$
where $g(x)$ is divisible by $x_{i_2}^2$. Since $g(x) \del_\mu \in \LL_+^{(1)}$,
we get that $x_{i_1} \ldots x_{i_{p+1}} \del_\mu \in \LL_+^{(1)}$ and part (1) is proved.

Let us prove part (2). Consider the action of $\ad (x_j \del_\mu)$ on $\LL_p$. It follows from \eqref{lead}
that $x^k \del_\mu$ belongs to the subspace spanned by the eigenvectors for  $\ad (x_j \del_\mu)$
with eigenvalues $\mu_j s$, where $s \geq k_j -1$. By Lemma 2 from \cite{Jet}, given
a finite-dimensional $\LL_+$-module $(U, \rho)$, there exists $m \in \N$ such that for all 
$j = 1, \ldots, n$, every eigenvector of  $\ad (x_j \del_\mu)$ with an eigenvalue $\mu_j s$, with $s \geq m$,
acts trivially on $U$. Thus $\rho(x^k \del_\mu) = 0$ if $k_j -1 \geq m$ for some $j$. This implies
$\rho(\LL_p) = 0$ if $p \geq nm$, which establishes claim (2).

 It follows from (2) that every finite-dimensional representation of $\LL_+$ factors through the quotient
$\LL_+ / \sum\limits_{j \geq p} \LL_j$ for some $p$. This quotient is a finite-dimensional solvable
Lie algebra and by the Lie's Theorem (\cite{Bour}, Corollary I.5.3.2), its finite-dimensional irreducible representations 
have dimension one.

Finally, to prove part (4), we note that 1-dimensional representations of $\LL_+$ are determined by 
1-dimensional representations of $\LL^+ / \LL_+^{(1)}$. Since 
$\dim \left( \LL^+ / \LL_+^{(1)} \right) = 1$, such representations are described by a single parameter
$\alpha \in \C$. The map
$$\varphi : \quad \LL_+ / \LL_+^{(1)} \rightarrow \C$$
is given by $\varphi( x_j \del_\mu) = \mu_j$, $1\leq j \leq n$, $\varphi(\LL_p) = 0$ for $p \geq 1$.
Claim (4) now follows. 
\hfill\qed 
\section{$A$-cover of a cuspidal $W_\mu$-module}
In this section we shall establish a relation between cuspidal $W_\mu$-modules and cuspidal
$AW_\mu$-modules. We are going to prove the following theorem:
\begin{thm}
Let $M$ be a cuspidal $W_\mu$-module satisfying $W_\mu M = M$. Then there exists a cuspidal
$AW_\mu$-module $\widehat{M}$ and a surjective homomorphism of $W_\mu$-modules
$\widehat{M} \rightarrow M$.
\label{cover}
\end{thm}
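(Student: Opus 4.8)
The plan is to realize $M$ as a homomorphic image of an explicit tensor construction and then cut that construction down to a cuspidal module. First I would form the vector space $\widetilde{M} = A \otimes M$ and equip it with an $AW_\mu$-module structure: let $A$ act on the first factor, $t^s \cdot (t^r \otimes v) = t^{r+s}\otimes v$, and let $W_\mu$ act by the ``twisted'' formula
\[
(t^s d_\mu)\cdot (t^r \otimes v) = \mu\cdot(r-s)\, t^{r+s}\otimes v + t^r \otimes (t^s d_\mu) v .
\]
A direct check using the bracket \eqref{sol} shows that this is a genuine action of $W_\mu$, compatible with the $A$-action in the sense of \eqref{compat}, so that $\widetilde{M}$ is an $AW_\mu$-module. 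The formula is arranged precisely so that the evaluation map $\pi\colon \widetilde{M} \to M$, $\pi(t^r \otimes v) = (t^r d_\mu) v$, is a homomorphism of $W_\mu$-modules: the difference $\pi\big((t^s d_\mu)(t^r\otimes v)\big) - (t^s d_\mu)\pi(t^r\otimes v)$ collapses to $[t^s d_\mu, t^r d_\mu]v - \mu\cdot(r-s)(t^{r+s}d_\mu)v = 0$. Since $W_\mu M = M$, the map $\pi$ is surjective.

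Next I would pass to a quotient to remove the redundancy. Let $K$ be the sum of all $AW_\mu$-submodules of $\widetilde{M}$ contained in $\ker\pi$; since $\ker\pi$ is itself a $W_\mu$-submodule, $K$ is the unique maximal $AW_\mu$-submodule inside $\ker\pi$, and because $\pi$ preserves weights it is graded. Set $\widehat{M} = \widetilde{M}/K$. Then $\pi$ descends to a surjective $W_\mu$-homomorphism $\widehat{M} \to M$, and $\widehat{M}$ is an $AW_\mu$-module. It remains to prove that $\widehat{M}$ is cuspidal, and here I would invoke the same mechanism used in Section~3: with respect to the $\Gamma_\mu$-grading, every nonzero homogeneous element of $A$ is invertible (by genericity of $\mu$ the map $r \mapsto \mu\cdot r$ is injective, so each $A_\gamma$ is at most one-dimensional). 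Hence a graded $A$-module has all of its weight spaces of the same dimension, and $\widehat{M}$ is cuspidal as soon as a single weight space $\widehat{M}_\beta$ is finite-dimensional.

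The main obstacle is precisely this finiteness of $\widehat{M}_\beta$. Writing $\widetilde{M}_\beta = \bigoplus_{r\in\Z^n} t^r \otimes M_{\beta - \mu\cdot r}$, the difficulty is to show that all but finitely many of these shifted copies are absorbed into $K$, and this is where the uniform bound $\dim M_\lambda \le C$ must be used decisively. Computing the weight-preserving operators $D(s) = t^{-s}\circ(t^s d_\mu)$ gives $D(s)(t^r\otimes v) = \mu\cdot(r-s)\,t^r\otimes v + t^{r-s}\otimes (t^s d_\mu)v$, so the diagonal part is harmless while the operators $t^s d_\mu$ couple arbitrarily distant blocks $r$ and $r-s$; this long-range coupling is exactly what prevents $\widetilde{M}$ from being cuspidal. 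I expect the crux of the argument to be a locality lemma in the spirit of \cite{Jet} and \cite{BF}: the bounded multiplicities of $M$ should force, in each fixed direction, a finite-difference relation among the operators $t^s d_\mu$ that expresses the high-order terms through lower ones, thereby collapsing all but finitely many blocks into $K$ and making the descended family $D(s)$ polynomial in $s$ of bounded degree. Once this finiteness is in place, $\widehat{M}_\beta$ is finite-dimensional, $\widehat{M}$ is cuspidal, and by Theorem~\ref{AW} it takes the form $A\otimes U$ with $\dim U < \infty$, completing the construction of the $A$-cover.
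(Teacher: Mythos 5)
Your construction is sound as far as it goes, and it is in fact the paper's construction in disguise: the map $t^r \otimes v \mapsto \psi(t^r d_\mu, v)$, where $\psi(x,u)(f) = (fx)u$, carries your twisted action on $\widetilde{M} = A \otimes M$ exactly onto the $AW_\mu$-action on the paper's $A$-cover $\widehat{M} \subset \Hom(A,M)$ (compare $y\,\psi(x,u) = \psi([y,x],u) + \psi(x,yu)$ with your formula, using $[t^s d_\mu, t^r d_\mu] = \mu\cdot(r-s)\,t^{r+s}d_\mu$), and your evaluation $\pi(t^r\otimes v) = (t^r d_\mu)v$ matches $\pi(\psi(x,u)) = xu$. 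So modulo whether one quotients by the kernel of $A\otimes M \to \Hom(A,M)$ or by your larger maximal submodule $K$, the setup and the surjectivity from $W_\mu M = M$ are correct.

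The genuine gap is the step you explicitly defer: the cuspidality of the quotient, which is the entire content of the theorem (as you note, $\widetilde{M}_\beta = \bigoplus_{r} t^r \otimes M_{\beta - \mu\cdot r}$ is infinite-dimensional, so everything hinges on the collapse). The paper proves this via the differentiators $\Omega^{(m,h)}_{k,s} = \sum_{i=0}^m (-1)^i {m \choose i} e_{k-ih} e_{s+ih}$, in two stages. First, the quoted $W_1$-result of \cite{BF} gives, for each direction $h$, an order $r$ depending only on the multiplicity bound of $M$ with $\Omega^{(r,h)}_{q,p} \in \Ann(M)$ for $q,p \in \Z h$ (Corollary \ref{Wone}); this is precisely your hoped-for ``finite-difference relation in each fixed direction,'' but by itself it only controls products $e_{k-ih}e_{s+ih}$ with \emph{both} indices on a single line $\Z h$. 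Second --- and this is the ingredient your sketch does not supply --- the quadratic identity \eqref{Om}, expressing $(q-s)(p-k+2rh)\,\Omega^{(4r,h)}_{k+p+2rh,\,s+q-2rh}$ through anticommutators of order-$r$ differentiators, bootstraps the annihilation to arbitrary $k \in \Gamma_\mu$ with $s \in \Z h$ (Proposition \ref{half}). The long-range coupling you correctly identify mixes all directions of the dense subgroup $\Gamma_\mu \subset \C$, so the rank-one relations alone give no mechanism forcing all but finitely many blocks into $K$. Even granting Proposition \ref{half}, one still needs the argument of \cite{BF} converting differentiator annihilation into the bound $\dim \widehat{M}_\beta < \infty$. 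As written, your proposal establishes only that \emph{some} $AW_\mu$-module surjects onto $M$, not that a cuspidal one does.
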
 
Following \cite{BF}, we define certain elements in the universal enveloping algebra of $W_\mu$.
\begin{dfn}
For $k, s, h \in \Gamma_\mu$ we define a {\it differentiator} of order $m$ as
$$\Omega_{k,s}^{(m,h)} = \sum_{i=0}^m (-1)^i {m \choose i} e_{k-ih} e_{s+ih} \in U(W_\mu). $$
\end{dfn}
The following theorem shows that a subspace in $U(W_\mu)$ spanned by products of differentiators 
of a given order, contains differentiators of higher orders. Here we denote by $\{ X, Y \}$ the
anticommutator $XY + YX$.
\begin{thm} (\cite{BF}) 
Let $r \geq 2$, $k,s,p,q,h \in \Gamma_\mu$. Then
\begin{align}
\sum_{i=0}^r \sum_{j=0}^r (-1)^{i+j} {r \choose i} {r \choose j}
& \left( \left\{ \Omega^{(r,h)}_{k-ih,s-jh} , \Omega^{(r,h)}_{q+ih,p+jh} \right\}
-       \left\{ \Omega^{(r,h)}_{k-ih,q-jh} , \Omega^{(r,h)}_{s+ih,p+jh} \right\} \right) \notag \\
& = (q-s) (p-k+2rh) \Omega^{(4r,h)}_{k+p+2rh,s+q-2rh} .
\label{Om}
\end{align}
\end{thm}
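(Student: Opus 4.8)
The plan is to work directly in $U(W_\mu)$ and to exploit the finite-difference nature of the differentiators. Writing $\sigma_h$ for the shift sending a pair of indices $(a,b)$ to $(a-h,b+h)$, one has $\Omega^{(m,h)}_{a,b} = (1-\sigma_h)^m(e_a e_b)$, so the outer double sum over $i,j$ in \eqref{Om} adds two further layers of finite differences on top of the ones already built into each $\Omega^{(r,h)}$. I would encode all of these by generating functions: attaching a formal variable $Z_p$ to each of the four factors $e$ in a product of two differentiators, the operator $(1-\sigma_h)^r$ acting on a pair $(Z_p,Z_q)$ becomes multiplication by $(1-Z_p^{-h}Z_q^h)^r$, and each outer sum $\sum_i(-1)^i{r \choose i}(\cdots)$ likewise becomes such a factor coupling the two differentiators. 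The whole left-hand side is then represented by an explicit Laurent expression, and the identity becomes a statement about this expression modulo the relations \eqref{witt}.

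The first and cleanest step is to show that the top-degree (degree four) part vanishes. Passing to the symmetric algebra $S(W_\mu)$, the symbol of the left-hand side is represented, up to an overall scalar, by
\[
Z_1^k Z_4^p\,(Z_2^s Z_3^q - Z_2^q Z_3^s)\,\Pi,
\]
where $\Pi = (1-Z_1^{-h}Z_2^h)^r(1-Z_3^{-h}Z_4^h)^r(1-Z_1^{-h}Z_3^h)^r(1-Z_2^{-h}Z_4^h)^r$. The class it defines in $S^4(W_\mu)$ is detected by symmetrizing over the four variables $Z_1,\ldots,Z_4$, so it is zero as soon as the expression is antisymmetric under some transposition fixing $\Pi$. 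The transposition $\tau=(Z_2\,Z_3)$ does exactly this: it merely permutes the four factors of $\Pi$ among themselves, while it sends the prefactor $Z_2^s Z_3^q - Z_2^q Z_3^s$ to its negative. Hence the degree-four symbol is killed by symmetrization; this is the origin of the antisymmetry under $s\leftrightarrow q$, and therefore of the factor $(q-s)$ on the right-hand side.

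Next I would strip off the lower-degree contributions by repeatedly applying the bracket $[e_a,e_b]=(b-a)e_{a+b}$, each use of which lowers the degree by one and contributes a factor affine-linear in the indices, hence affine-linear in the summation variables. The same symmetrization argument, now over three variables, should annihilate the degree-three part: after a single contraction the relevant surviving finite-difference factor is still $\tau$-invariant while the prefactor stays antisymmetric. What remains is the degree-two part coming from a double contraction, in which the two bracket factors evaluate to $(q-s)$ and $(p-k+2rh)$, and the four factors $(1-Z_\bullet^{-h}Z_\bullet^h)^r$ merge, along the diagonal selected by the double contraction, into a single factor $(1-Z^{-h}W^h)^{4r}$ based at the indices $k+p+2rh$ and $s+q-2rh$. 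Read back as an element of $U(W_\mu)$, this is precisely $(q-s)(p-k+2rh)\,\Omega^{(4r,h)}_{k+p+2rh,\,s+q-2rh}$.

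The main obstacle is the degree-two bookkeeping. One must verify that the various ways of performing a double contraction --- together with the interplay between the two halves $XY$ and $YX$ of each anticommutator and between the two anticommutators --- conspire so that all four order-$r$ differences genuinely coalesce into a single order-$4r$ difference, with the correct base indices and the exact scalar $(q-s)(p-k+2rh)$, and in particular so that the shift $2rh$ comes out right. The generating-function formalism makes the final shape of the answer predictable, but the two-sided series involved must be multiplied with point-splitting (normal ordering) to be well defined, and it is this careful accounting of contractions, rather than any isolated conceptual difficulty, that forms the bulk of the argument.
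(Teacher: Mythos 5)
The paper you are working from does not actually prove this theorem --- it quotes it from \cite{BF} --- so your outline can only be judged on its own merits, and on those merits it is a viable strategy whose first step is correct but whose remaining steps contain a genuine gap. The degree-four computation is right and cleanly packaged: the symbol of the left-hand side in $S^4(W_\mu)$ is represented by $2\,Z_1^kZ_4^p\,(Z_2^sZ_3^q-Z_2^qZ_3^s)\,\Pi$, the transposition $Z_2\leftrightarrow Z_3$ permutes the four factors of $\Pi$ while negating the prefactor, and symmetrization kills the class. But your degree-three step is not substantiated, and the mechanism you propose for it (``the same symmetrization argument, now over three variables'') is not what makes that layer vanish. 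The actual reasons are two, and neither appears in your sketch: first, the degree-three component of an anticommutator of quadratic monomials is $[e_a,e_b]\,e_c e_d + e_a e_b\,[e_c,e_d]$ in $S^3$ --- all \emph{cross} contractions between the two differentiators cancel between $XY$ and $YX$, which is precisely why the statement involves anticommutators rather than plain products; second, the surviving \emph{internal} contractions carry the scalar $\sum_{u=0}^r(-1)^u\binom{r}{u}(b-a+2uh)$, which vanishes because an order-$r$ difference with $r\geq 2$ annihilates affine functions of $u$. This is where the hypothesis $r\geq 2$ enters the theorem; your argument never uses it, which is a reliable sign that the degree-three step as you describe it would not go through.

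At degree two you explicitly concede that the ``careful accounting of contractions\dots forms the bulk of the argument,'' i.e.\ the quantitative content of the identity --- that only the cross--cross double contractions survive (against the nested double commutators and the internal--internal and internal--cross terms), that the four factors $(1-Z^{-h}W^{h})^r$ merge by Vandermonde convolution into the coefficients $\binom{4r}{\cdot}$ of a single order-$4r$ differentiator, and that the base indices acquire exactly the shift $2rh$ with scalar $(q-s)(p-k+2rh)$ --- is asserted, not proved. Moreover, since \eqref{Om} is an exact identity in $U(W_\mu)$ and not merely an identity of symbols, matching the degree-two components in $S^2$ would still not finish the proof: the degree-one and degree-zero parts of both sides must also be shown to agree (note that $\Omega^{(4r,h)}$ itself has nontrivial lower-order components under symmetrization). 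One small point in the other direction: your worry about ``two-sided series'' needing normal ordering is vacuous here, since every sum involved is finite. In short, the generating-function framework is sound and the degree-four cancellation is correct, but the proof stops exactly where the theorem begins.
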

Using this theorem we are going to prove that every cuspidal $W_\mu$-module is annihilated by 
the differentiators of high enough orders. For the Witt algebra $W_1$ we can quote a result
from \cite{BF}:
\begin{thm}
(\cite{BF})
For every $\ell \in \N$ there exists $r \in \N$ such that for all $k, s \in \Z$ the differentiators
$$\Omega^{(r)}_{k,s} =   \sum_{i=0}^r (-1)^i {r \choose i} e_{k-i} e_{s+i}$$
annihilate all cuspidal $W_1$-modules with a composition series of length $\ell$.
\end{thm}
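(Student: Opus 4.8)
The plan is to combine two ingredients: first, that differentiators of order $\geq 3$ already annihilate every \emph{simple} cuspidal $W_1$-module, and second, that the identity \eqref{Om} rewrites a single high-order differentiator as a combination of products of lower-order ones. The first handles the case $\ell=1$, while the second supplies an inductive mechanism that converts the bound on the length of a composition series into a bound on the order of the differentiator.

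For the base case I would argue by direct computation. By Mathieu's theorem (recalled in the introduction), a simple cuspidal $W_1$-module is a tensor module $T(\alpha,\beta)$, the quotient $\overline{T}(0,0)$, or the trivial module. On $T(\alpha,\beta)$, using $e_k v_t = (t+\alpha k)v_{t+k}$, one finds
$$e_{k-i}e_{s+i}\, v_t = (t+\alpha s + \alpha i)\bigl(t+s+\alpha k + (1-\alpha)i\bigr)\, v_{t+s+k},$$
so that
$$\Omega^{(r)}_{k,s}\, v_t = \Bigl(\sum_{i=0}^r (-1)^i {r\choose i} f(i)\Bigr) v_{t+s+k},$$
where $f(i)$ is a polynomial of degree $2$ in $i$. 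The bracketed sum is, up to sign, the $r$-th finite difference of $f$, which vanishes as soon as $r\geq 3$. The same conclusion holds on $\overline{T}(0,0)$ (a quotient of $T(0,0)$) and trivially on the $1$-dimensional module. Hence for every $k,s\in\Z$ the differentiator $\Omega^{(r)}_{k,s}$ with $r\geq 3$ annihilates every simple cuspidal $W_1$-module.

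Now fix a cuspidal module $M$ with composition series $0=M_0\subset M_1\subset\cdots\subset M_\ell=M$. Since every subquotient $M_j/M_{j-1}$ is again a simple cuspidal module, the base case shows that any differentiator of order $\geq 3$ maps $M_j$ into $M_{j-1}$. I would then put $r_a = 3\cdot 4^a$ and prove by induction on $a$ that $\Omega^{(r_a)}_{k,s}$ maps $M_j$ into $M_{j-2^a}$ for all $j,k,s$. For the inductive step, specialize \eqref{Om} to $h=1$: for any prescribed target indices $u,w\in\Z$ one chooses $k,s,p,q\in\Z$ with $k+p+2r=u$, $s+q-2r=w$ and $(q-s)(p-k+2r)\neq 0$, which expresses $\Omega^{(4r)}_{u,w}$ as a linear combination of anticommutators of two order-$r$ differentiators (with suitably shifted indices). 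If each order-$r_a$ differentiator pushes the filtration down by $2^a$, then each such product pushes it down by $2^{a+1}$, and hence so does $\Omega^{(r_{a+1})}_{k,s}=\Omega^{(4r_a)}_{k,s}$. Choosing $a$ with $2^a\geq\ell$ gives $\Omega^{(r_a)}_{k,s}M=\Omega^{(r_a)}_{k,s}M_\ell\subseteq M_{\ell-2^a}=0$, so that $r=3\cdot 4^{\lceil\log_2\ell\rceil}$ works, uniformly in $k,s$.

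The main obstacle I anticipate is the bookkeeping in the inductive step: I must check that \eqref{Om} indeed realizes $\Omega^{(4r)}_{u,w}$ for \emph{arbitrary} targets $u,w$ with a nonzero scalar coefficient (a routine parameter count), and that \emph{every} differentiator occurring on the left-hand side of \eqref{Om} has order $r\geq 3$, so that the descent hypothesis applies uniformly to all of them. Granting this, the doubling of the descent depth at each stage is exactly what turns the hypothesis of length $\ell$ into a single order bound $r$ independent of $k$ and $s$.
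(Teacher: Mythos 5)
Your proof is correct, and it is essentially the argument of \cite{BF}, which this paper cites for the statement without reproducing a proof: the base case by Mathieu's classification plus the observation that $\Omega^{(r)}_{k,s}$ acts on $T(\alpha,\beta)$ as the $r$-th finite difference of a degree-$2$ polynomial in $i$ (hence vanishes for $r\geq 3$), and the inductive step via the identity \eqref{Om}, where quadrupling the order doubles the depth of descent along the composition series. Your parameter choices (picking $k\neq u/2$, $s\neq (w+2r)/2$ so that $(q-s)(p-k+2r)\neq 0$) and the bound $r=3\cdot 4^{\lceil\log_2\ell\rceil}$, depending only on $\ell$, close the argument as required.
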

Note that this result may be stated in terms of the bound on the dimensions of weight spaces
instead of the length of the composition series, since for a cuspidal $W_1$-module $M$ the length
of its composition series does not exceed $\dim M_0 + \dim M_\lambda$, where $\lambda \in \beta + \Gamma_\mu$,
$\lambda \neq 0$.

 For any non-zero $h \in\Gamma_\mu$, we can view a $W_\mu$-module as a module for the subalgebra 
with basis $\{ e_{ih} \, | \, i \in\Z\}$, which is isomorphic to $W_1$. Then a cuspidal $W_\mu$-module
becomes a direct sum of cuspidal $W_1$-submodules, corresponding to the cosets of $\Z h$ in $\Gamma_\mu$,
and all summands have a common bound on the dimensions of weight spaces. As a consequence we get
\begin{cor}
Let $M$ be a cuspidal $W_\mu$-module. Then there exists $r \in \N$, which depends only on the bound
for the dimensions of weight spaces of $M$, such that $\Omega^{(r,h)}_{q,p}$ annihilates $M$ for all
$h \in \Gamma_\mu$, $q, p \in \Z h \subset \Gamma_\mu$.
\label{Wone}
\end{cor}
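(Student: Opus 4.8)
The plan is to deduce Corollary \ref{Wone} from the preceding Theorem on $W_1$-annihilation by restricting to a copy of the Witt algebra inside $W_\mu$. First I would fix an arbitrary nonzero $h \in \Gamma_\mu$ and consider the subalgebra $W_\mu^{(h)}$ with basis $\{ e_{ih} \mid i \in \Z\}$; by the bracket \eqref{witt} this subalgebra is isomorphic to $W_1$ via the rescaling $e_{ih} \leftrightarrow e_i$ (the factor $h$ multiplying the structure constants is harmless since $h \neq 0$). The key observation, already recorded in the text preceding the corollary, is that a cuspidal $W_\mu$-module $M$, when restricted to $W_\mu^{(h)}$, decomposes as a direct sum of $W_1$-modules indexed by the cosets of $\Z h$ in $\Gamma_\mu$. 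Crucially, every one of these summands is cuspidal with weight spaces bounded by the \emph{same} constant that bounds the weight spaces of $M$, because each weight space of a summand is literally a weight space of $M$.

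Next I would invoke the uniform bound. The $W_1$-annihilation theorem guarantees, for each $\ell \in \N$, an integer $r$ depending only on $\ell$ such that $\Omega^{(r)}_{q,p}$ kills every cuspidal $W_1$-module of composition length $\leq \ell$; and by the remark following that theorem, $\ell$ can be controlled purely by the dimension bound on weight spaces. Since all the $W_1$-summands arising from the various cosets and from every choice of $h$ share one common bound on weight-space dimensions, they share one common composition-length bound $\ell$, hence one common $r$. Translating $\Omega^{(r)}_{q,p}$ back through the isomorphism $e_{ih}\leftrightarrow e_i$ turns it into exactly $\Omega^{(r,h)}_{q,p}$ for $q,p \in \Z h$, and this operator annihilates each $W_1$-summand, therefore annihilates all of $M$.

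The one point requiring care — and the main (mild) obstacle — is the uniformity of $r$ across all $h \in \Gamma_\mu$ simultaneously. A priori the isomorphism type of $W_\mu^{(h)}$ and the resulting decomposition depend on $h$, so one might fear that $r$ has to grow with $h$. This does not happen precisely because the $W_1$-theorem's output depends only on the weight-space dimension bound (equivalently the composition length), and that bound is an intrinsic invariant of $M$ that is insensitive to which subalgebra $W_\mu^{(h)}$ we restrict to. I would make this explicit: fix the bound $N$ on $\dim M_\lambda$ over all $\lambda$, set $\ell = 2N$ (using the stated inequality $\ell \leq \dim M_0 + \dim M_\lambda$), and take the corresponding $r = r(\ell)$ from the $W_1$-theorem; this single $r$ then works for every $h$, $q$, $p$ as claimed. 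The remaining verifications — that the rescaling $e_{ih}\leftrightarrow e_i$ is a genuine Lie algebra isomorphism and that it carries $\Omega^{(r)}$ to $\Omega^{(r,h)}$ — are routine unwindings of the definitions \eqref{witt} and of the differentiator, and I would state them without belaboring the computation.
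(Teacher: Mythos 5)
Your proposal is correct and follows essentially the same route as the paper: restrict to the copy of $W_1$ spanned by $\{e_{ih}\}$, decompose $M$ into cuspidal $W_1$-summands indexed by cosets of $\Z h$ in $\Gamma_\mu$, and apply the $W_1$-theorem, with uniformity in $h$ coming from the fact that $r$ depends only on the common weight-space bound (via $\ell \leq \dim M_0 + \dim M_\lambda$). Your additional care about the rescaled isomorphism $e_{ih} \mapsto h\,e_i$ and the explicit choice $\ell = 2N$ only makes explicit what the paper leaves implicit.
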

Now let us establish a generalization of the above corollary:
\begin{prp}
Let $M$ be a cuspidal $W_\mu$-module. Then there exists $m \in \N$, which depends only on the bound
on the dimensions of weight spaces of $M$, such that $\Omega^{(m, h)}_{k,s}$ annihilates $M$
for all $h, k \in \Gamma_\mu$ and $s \in \Z h$.
\label{half}
\end{prp}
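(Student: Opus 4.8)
The plan is to read off the result directly from the identity \eqref{Om}, feeding in Corollary \ref{Wone} together with the trivial observation that an anticommutator $\{X,Y\}$ annihilates $M$ as soon as one of its two factors does. Let $r\in\N$ be the order furnished by Corollary \ref{Wone} for the given bound on the weight spaces. I may assume $r\geq 2$: if the value supplied were smaller, the finite-difference recursion $\Omega^{(r+1,h)}_{q,p}=\Omega^{(r,h)}_{q,p}-\Omega^{(r,h)}_{q-h,p+h}$ shows that whenever all order-$r$ differentiators with both indices in $\Z h$ kill $M$, so do all order-$(r+1)$ ones, and this enlargement keeps both indices in $\Z h$. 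I will then prove the proposition with $m=4r$.

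The main step is to specialize the free parameters in \eqref{Om} so that $k\in\Gamma_\mu$ stays arbitrary but $s,q,p\in\Z h$. With this choice, the differentiator appearing as the \emph{second} entry of each anticommutator on the left-hand side, namely $\Omega^{(r,h)}_{q+ih,p+jh}$ in the first and $\Omega^{(r,h)}_{s+ih,p+jh}$ in the second, has \emph{both} of its indices in $\Z h$ (since $q,p,s\in\Z h$ and $i,j\in\Z$), and hence annihilates $M$ by Corollary \ref{Wone}. Because $\{X,Y\}v=X(Yv)+Y(Xv)=0$ for every $v\in M$ as soon as $Y$ kills $M$, each of the two anticommutators annihilates $M$; therefore the whole left-hand side of \eqref{Om} annihilates $M$, and so does the right-hand side $(q-s)(p-k+2rh)\,\Omega^{(4r,h)}_{k+p+2rh,\,s+q-2rh}$.

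It remains to realize an arbitrary target $\Omega^{(4r,h)}_{K,S}$ with $K\in\Gamma_\mu$ and $S\in\Z h$ while keeping the scalar coefficient nonzero. Setting $K=k+p+2rh$ and $S=s+q-2rh$, one sees that $S\in\Z h$ is automatic and that $K$ ranges over all of $\Gamma_\mu$ as $k$ varies. Given such a pair $(K,S)$, I would pick $s\in\Z h$ and put $q=S+2rh-s\in\Z h$, and pick $p\in\Z h$ and put $k=K-p-2rh\in\Gamma_\mu$; the two nonvanishing requirements $q\neq s$ and $k\neq p+2rh$ amount to avoiding a single value of $s$ and a single value of $p$, which is possible since $\Z h$ is infinite. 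Dividing by the nonzero scalar then shows that $\Omega^{(4r,h)}_{K,S}$ annihilates $M$, establishing the proposition with $m=4r$; as $r$ depends only on the bound on the dimensions of the weight spaces, so does $m$, uniformly in $h$.

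The only genuinely delicate point is the index bookkeeping: one must choose the specialization $s,q,p\in\Z h$ so that \emph{both} anticommutators expose a factor lying in the range covered by Corollary \ref{Wone}, and simultaneously verify that the coefficient $(q-s)(p-k+2rh)$ can be kept away from zero while $(K,S)$ still sweeps out all of $\Gamma_\mu\times\Z h$. Everything past that is a direct substitution into \eqref{Om}.
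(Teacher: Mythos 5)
Your proposal is correct and follows essentially the same route as the paper's proof: specialize the identity \eqref{Om} with $s,q,p\in\Z h$ and $k\in\Gamma_\mu$ arbitrary, kill the left-hand side via Corollary \ref{Wone} applied to the second factor of each anticommutator, and conclude with $m=4r$. You additionally spell out details the paper leaves implicit -- the Pascal-type recursion guaranteeing $r\geq 2$, the one-sided annihilation argument for $\{X,Y\}$, and the choice of $s,p$ keeping the scalar $(q-s)(p-k+2rh)$ nonzero while $(K,S)$ sweeps $\Gamma_\mu\times\Z h$ -- all of which check out.
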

\begin{proof}
Fix $h \in \Gamma_\mu$. By Corollary \ref{Wone} there exists $r\in \N$ such that for all $q, p \in \Z h$,
$\Omega^{(r,h)}_{q,p} \in \Ann (M)$. Then for $k \in \Gamma_\mu$, $s, q, p \in \Z h$, the left hand side
of \eqref{Om} annihilates $M$. Thus  $\Omega^{(4r,h)}_{k+p+2rh,s+q-2rh} \in \Ann (M)$.
Setting $m = 4r$, we get the claim of the proposition.
\end{proof}

 Now let us proceed with the proof of Theorem  \ref{cover}.

Given a cuspidal $W_\mu$-module $M$, we construct the coinduced module $\Hom (A, M)$, which
is an $AW_\mu$-module with the following action of $A$ and $W_\mu$ (\cite{BF}, Proposition 4.3):
\begin{align*}
&(x \phi) (f) = x (\phi(f)) - \phi(xf), \hspace{4cm} \\
&(g \phi ) (f) = \phi(gf),  \quad \phi\in\Hom (A, M), x \in W_\mu, f, g \in A.
\end{align*}
The map $\pi: \; \Hom(A, M) \rightarrow M$, $\pi (\phi) = \phi(1)$, is a surjective homomorphism 
of $W_\mu$-modules.
\begin{dfn} (\cite{BF})
An $A$-cover of a cuspidal $W_\mu$-module $M$ is an $AW_\mu$-submodule $\widehat{M}$ in the coinduced
module $\Hom (A, M)$, spanned by
$$\left\{ \psi(x, u) \, | \, x \in W_\mu, u \in M \right\}, $$
where
$\psi (x, u) : A \rightarrow M$ is given by
$$\psi(x, u) (f) = (fx) u .$$
\end{dfn}
The $AW_\mu$-action on $\widehat{M}$ is
\begin{align*}
&y \psi(x, u) = \psi( [y,x], u) + \psi(x, yu), \hspace{3cm} \\
&g \psi(x, u) = \psi(gx, u), \quad x, y \in W_\mu, u \in M, g \in A .
\end{align*}
The map $\pi: \; \widehat{M} \rightarrow M$, $\pi(\psi(x, u)) = \psi(x, u) (1) = x u$, is a homomorphism of 
$W_\mu$-modules with $\pi(\widehat{M}) = W_\mu M$.

It turns out that $\widehat{M}$ is a cuspidal $AW_\mu$-module. The proof of this fact is the same as given in \cite{BF}, and its key
ingredient is Proposition \ref{half}. This establishes Theorem \ref{cover}.
\hfill\qed

Now we can prove the main result of the paper.

\begin{thm}
Let $\mu\in\C$ be generic. Every simple cuspidal $W_\mu$-module is isomorphic to either

$\bullet$ a module of tensor fields $T(\alpha, \beta)$ with $\alpha \neq 0, 1$ or $\alpha = 0, 1$ and
$\beta \not\in\Gamma_\mu$,

$\bullet$ the quotient $\overline{T}(0,0)$ of the module of functions by the 1-dimensional submodule
$\C v_0$ of constant functions,

or

$\bullet$ the trivial 1-dimensional module.
\label{main}
\end{thm}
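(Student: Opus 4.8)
The plan is to reduce the classification to the two pillars already in place: the $A$-cover construction of Theorem \ref{cover} and the description of simple cuspidal $AW_\mu$-modules in Corollary \ref{simpleAW}. Let $M$ be a simple cuspidal $W_\mu$-module. Since $W_\mu M$ is a $W_\mu$-submodule of $M$, simplicity forces $W_\mu M = 0$ or $W_\mu M = M$. If $W_\mu M = 0$ then $M$ is a trivial module and, being simple, is one-dimensional, which is the last item on the list. For the rest of the argument I assume $W_\mu M = M$, so that Theorem \ref{cover} applies and furnishes a cuspidal $AW_\mu$-module $\widehat{M}$ together with a surjective homomorphism of $W_\mu$-modules $\pi \colon \widehat{M} \to M$.

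Next I would exploit the $AW_\mu$-structure of $\widehat{M}$. By Theorem \ref{AW} we have $\widehat{M} \cong A \otimes U$ for a finite-dimensional $\LL_+$-module $U$, and the equivalence of categories recorded there identifies $AW_\mu$-submodules of $\widehat{M}$ with $\LL_+$-submodules of $U$. As $U$ is finite-dimensional it has a finite composition series as an $\LL_+$-module; transporting this chain back through the equivalence produces a finite filtration
$$ 0 = V_0 \subset V_1 \subset \cdots \subset V_\ell = \widehat{M}$$
by $AW_\mu$-submodules whose successive quotients are simple cuspidal $AW_\mu$-modules. By Corollary \ref{simpleAW} each quotient is a tensor module, $V_j / V_{j-1} \cong T(\alpha_j, \beta)$, where $\beta + \Gamma_\mu$ is the fixed support coset of $M$.

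I would then push this filtration forward along $\pi$. The images form a chain of $W_\mu$-submodules $\pi(V_0) \subseteq \pi(V_1) \subseteq \cdots \subseteq \pi(V_\ell) = M$, and since $M$ is simple each $\pi(V_j)$ equals $0$ or $M$. Choosing $j$ minimal with $\pi(V_j) = M$ gives $\pi(V_{j-1}) = 0$, so $\pi$ induces a surjective $W_\mu$-homomorphism $V_j / V_{j-1} \twoheadrightarrow M$. As $V_j/V_{j-1} \cong T(\alpha_j, \beta)$, we conclude that $M$ is a simple $W_\mu$-quotient of a single tensor module $T(\alpha, \beta)$. Note that $\pi$ is only a $W_\mu$-homomorphism and not an $AW_\mu$-homomorphism, so this filtration argument, rather than a direct kernel computation, is what transfers the $AW_\mu$-information on $\widehat{M}$ to the purely $W_\mu$-module $M$; I expect this to be the main point requiring care.

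It remains to list the simple quotients of a tensor module, and here Proposition \ref{red} is decisive. If $T(\alpha,\beta)$ is simple -- equivalently, unless $\alpha \in \{0,1\}$ and $\beta \in \Gamma_\mu$ -- then $M \cong T(\alpha,\beta)$, giving the first item. In the reducible cases we may normalize $\beta = 0$. For $\alpha = 0$ the only proper submodule of $T(0,0)$ is $\C v_0$, since any $v_s$ with $s \neq 0$ generates the whole module; hence the unique simple quotient is $\overline{T}(0,0)$, the second item. For $\alpha = 1$, the submodule analysis of $T(1,0)$ shows its only proper nonzero submodule is $\theta(T(0,0)) \cong \overline{T}(0,0)$ of codimension one, and a direct check of the action on the remaining basis vector $v'_0$ shows the one-dimensional quotient is trivial. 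Assembling the three cases, together with the trivial module from the case $W_\mu M = 0$, reproduces exactly the list in the statement.
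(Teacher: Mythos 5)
Your proposal is correct and follows essentially the same route as the paper: trivial case via $W_\mu M = 0$, the $A$-cover $\widehat{M} \to M$ from Theorem \ref{cover}, a finite $AW_\mu$-composition series of $\widehat{M}$ whose simple quotients are tensor modules by Corollary \ref{simpleAW}, the minimal-$j$ argument to realize $M$ as a quotient of a single $T(\alpha,\beta)$, and Proposition \ref{red} to enumerate the simple quotients. Your only deviation is to justify the finite length of $\widehat{M}$ explicitly through the category isomorphism of Theorem \ref{AW} with finite-dimensional $\LL_+$-modules, which the paper leaves implicit; this is a harmless (indeed slightly more careful) variant of the same argument.
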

\begin{proof}
Let $M$ be a simple cuspidal $W_\mu$-module. Let us assume that it is different from the trivial 1-dimensional module.
Then $W_\mu M = M$. Construct the $A$-cover $\widehat{M}$ of $M$ with a surjective homomorphism of $W_\mu$-modules
$\pi: \; \widehat{M} \rightarrow M$. Consider the composition series
$$ 0 = \widehat{M}_0 \subset  \widehat{M}_1 \subset \ldots \subset  \widehat{M}_\ell =  \widehat{M}$$
with the quotients $ \widehat{M}_{i+1} /  \widehat{M}_i$ being simple $AW_\mu$-modules. Let $k$
be the smallest integer such that $\pi ( \widehat{M}_k ) \neq 0$. Since $M$ is a simple $W_\mu$-module,
we have $\pi ( \widehat{M}_k) = M$ and  $\pi ( \widehat{M}_{k-1}) = 0$.
The map $\pi$ factors through to
$$\overline{\pi}: \;  \widehat{M}_k /  \widehat{M}_{k-1} \rightarrow M .$$
The module $\widehat{M}_k /  \widehat{M}_{k-1} $ is a simple cuspidal $AW_\mu$-module, and by Corollary \ref{simpleAW}
is isomorphic to a module of tensor fields $T(\alpha, \beta)$ for some $\alpha, \beta \in\C$. Thus $M$ is a homomorphic
image of $T(\alpha, \beta)$.

By Proposition \ref{red}, $T(\alpha, \beta)$ is simple as a $W_\mu$-module if  $\alpha \neq 0, 1$ or $\alpha = 0, 1$ and
$\beta \not\in\Gamma_\mu$, thus $M \cong T(\alpha, \beta)$ in this case. If $\alpha = 0$ and $\beta \in \Gamma_\mu$
then the unique simple quotient of $T(0,0)$ is $\overline{T}(0,0)$. Finally, in case when $\alpha =1$ and $\beta\in\Gamma_\mu$,
the unique simple quotient of $T(1,0)$ is the trivial 1-dimensional module. This shows that the list given in the statement of the
theorem is complete.
\end{proof}

\

\section{Acknowledgements}
The first author is supported in part by a grant from the Natural Sciences
and Engineering Research Council of Canada.  
The second author is supported in part by the CNPq grant
(301743/2007-0) and by the Fapesp grant (2010/50347-9).
Part of this work was carried out during the visit of the first author
to the University of S\~ao Paulo in 2013. This author would like
to thank the University of S\~ao Paulo for hospitality and
excellent working conditions and Fapesp (2012/14961-0) for
financial support.

\end{document}